\DeclareMathAlphabet\mathbfcal{OMS}{cmsy}{b}{n}
\newcommand{\cX}{\mathcal X}
\newcommand{\rank}{\rm rank}
\newtheorem{proposition}{Proposition}
\setlist[description]{style=unboxed,leftmargin=.5em}
\setlist[itemize]{style=sameline,leftmargin=2em}
\newtheorem{theorem}{Theorem}[section]
\newtheorem{lemma}{Lemma}[section]
\newtheorem{remark}{Remark}[section]
\newtheorem{assumption}{Assumption}[section]
\title{Group Sparse-based Tensor CP
Decomposition: Model, Algorithms, and Applications in Chemometrics}
\author{Zihao Wang\footnote{School of Mathematics, Hunan University, Changsha, Hunan 410082,  P. R. China; Email: \texttt{wzh97@hnu.edu.cn}.}, ~ Minru Bai\footnote{Corresponding author. School of Mathematics, Hunan University, Changsha, Hunan 410082,  P. R. China; Email: \texttt{minru-bai@163.com}.}, ~ Liang Chen\footnote{School of Mathematics, Hunan University, Changsha, Hunan 410082,  P. R. China; Email: \texttt{chl@hnu.edu.cn}.}, ~ and ~ Xueying Zhao\footnote{College of Sciences, Tianjin University of Science and Technology, Tianjin 300457, P.R.China; Email: \texttt{xueying\_zhao@tust.edu.cn}. \\ \indent The research of this work was supported by the National Key R\&D Program of China (No. 2021YFA1001300), 
the National Natural Science Foundation of China (Nos. 12271150, 12071399, 11971159), 
the Natural Science Foundation of Hunan Province (No. 2023JJ10001), 
the Science and Technology Innovation Program of Hunan Province (No. 2022RC1190).}}
\date{\today}
\begin{document}
	
\maketitle

\begin{abstract}
The CANDECOMP/PARAFAC (or Canonical polyadic, CP) decomposition of tensors has numerous applications in various fields, such as chemometrics, signal processing, machine learning, etc. Tensor CP decomposition assumes the knowledge of the exact CP rank, i.e., the total number of rank-one components of a tensor. However, accurately estimating the CP rank is very challenging.  In this work, to address this issue, we prove that the CP rank can be exactly estimated by minimizing the group sparsity of any one of the factor matrices under the unit length constraints on the columns of the other factor matrices. Based on this result, we propose a CP decomposition model with group sparse regularization, which integrates the rank estimation and the tensor decomposition as an optimization problem, whose set of optimal solutions is proved to be nonempty. To solve the proposed model, we propose a double-loop block-coordinate proximal gradient descent algorithm with extrapolation and prove that each accumulation point of the sequence generated by the algorithm is a stationary point of the proposed model. Furthermore,  we incorporate a rank reduction strategy into the algorithm to reduce the computational complexity. Finally, we apply the proposed model and algorithms to the component separation problem in chemometrics using real data. Numerical experiments demonstrate the robustness and effectiveness of the proposed methods.
\end{abstract}

{\bf Key words:} CP decomposition, group sparse, double-loop,  block-coordinate proximal gradient descent, extrapolation, rank reduction
%%%%%%%%%%%

\section{Introduction}\label{sec1}
\indent
\par

Tensor decomposition can reveal underlying information in data analysis and is widely used in various fields \cite{Kolda}. 
The CANDECOMP/PARAFAC (CP) decomposition \cite{Carroll, PARAFAC}, also known as the canonical polyadic decomposition \cite{hitchcock1927tensor}, is one of the most important tasks in the decomposition of tensors. 
The CP decomposition of a given tensor $\mathcal{X}\in\mathds{R}^{n_1\times\cdots\times n_N}$ factorizes $\mathcal{X}$ into a sum of rank-one component tensors and takes the following form:
\begin{equation}\label{CPN}
    \mathcal{X}=[\![{\mathbf{A}}^{(1)},\cdots,{\mathbf{A}}^{(N)}]\!]
    :=\sum_{r=1}^R \mathbf{a}^{(1)}_r \circ \cdots \circ {\mathbf{a}}^{(N)}_{r},
\end{equation}
where $R$ is a positive integer,  $\mathbf{a}^{(i)}_r\in\mathds{R}^{n_i}, r=1,\cdots, R$, represents the $r$-th column of the factor matrix $\mathbf{A}^{(i)}\in\mathds{R}^{n_i\times R}, i=1,\cdots,N$, and the  symbol $\circ$ represents the outer product. 
In this expression, the CP rank of $\mathcal{X}$, denoted $\rank_{cp}(\mathcal{X})$, is defined as the smallest number of rank-one tensors, that is, the smallest $R$ such that \eqref{CPN} holds \cite{Kolda}.
If $R = \rank_{cp}(\mathcal{X})$ in \eqref{CPN}, then it is called the $\rank$ CP decomposition \cite{Kolda}.
The uniqueness of the decomposition of CP under mild assumptions \cite{domanov2013uniqueness, kruskal1977three, sidiropoulos2000uniqueness} leads to countless applications in
chemometrics \cite{Lenhardt2015, wu2024new, wu2024new1}, signal processing \cite{Nion2010, Sidiropoulos2000}, machine learning \cite{Nakatsuji2017, Signoretto2013}, neuroscience \cite{Cong2015, Davidson2013}, and hyperspectral imaging \cite{Sun2020, Xu2021}, to name only a few. One may refer to surveys such as \cite{Acar2009, Cichocki2016, Cichocki2017, Kolda} for a more general background.

Numerous CP decomposition models and the corresponding algorithms have been developed, with the assumption of a fixed CP rank, i.e., the number of rank-one components. The most widely studied CP decomposition model is the unconstrained optimization problem by minimizing the least squares (LS) loss function.
Several algorithms have been proposed to solve this unconstrained optimization model.
Harshman \cite{PARAFAC} first introduced the parallel factor analysis (PARAFAC) algorithm. Wu et al. \cite{ATLD} proposed an alternating trilinear decomposition algorithm (ATLD) for the trilinear decomposition model in chemometrics, i.e., the CP decomposition model. Kolda et al. \cite{Battaglino2018, Larsen2022} developed randomized CP decomposition algorithms for large-scale dense and sparse tensors. Wang et al. \cite{Wang2023, Wang2023a}  proposed accelerated stochastic gradient descent algorithms for large-scale tensor CP decomposition. 
In recent years, CP decomposition models with constraints, such as orthogonal, nonnegative and column unit constraints, have been proposed for practical applications.  
For example, Yang \cite{Yang2020} introduced an $\epsilon$ alternating least squares algorithm to solve the CP decomposition model with orthogonal constraints.  
Wang and Cong \cite{Wang2021} studied the CP decomposition model with nonnegative constraints and introduced an inexact accelerated proximal gradient (iAPG) algorithm to solve it.
Wang and Bai  \cite{WangCOAP} introduced a CP decomposition model with nonnegative and column unit constraints for the component separation problem in complex chemical systems, and proposed an accelerated inexact block coordinate descent algorithm to solve the model.

Currently, one crucial challenge in CP decomposition models and algorithms is the requirement for accurately estimating the CP rank in advance. 
Deviations from the true CP rank may increase fitting errors and cause factor degeneracy.
However, the tensor CP rank is NP-hard to compute \cite{Kolda} and there is no finite algorithm for determining the rank of a tensor \cite{Haastad1990, Kruskal1989RankDA}.
These observations naturally lead us to raise the following question: Can we establish a tensor CP decomposition model with CP rank estimation simultaneously?

To address this question, in this paper, 
 we introduce a CP decomposition model with group sparse regularization given by
\begin{equation}
	\begin{array}{lll}\label{CPGS}
		\min\limits_{\mathbf{A}^{(1)},\cdots, \mathbf{A}^{(N)}} \quad & \frac{1}{2}\|\mathcal{X}- [\![\mathbf{A}^{(1)},\cdots, \mathbf{A}^{(N)}]\!]\|^2+\lambda\|\mathbf{A}^{(N)}\|_{2,0} \\
		\text { s.t. } \quad& \|\mathbf{a}^{(i)}_r\|=1, i=1,\cdots,N-1, r=1\cdots R,
	\end{array}
\end{equation}
 where $\mathcal{X}\in\mathds{R}^{n_1\times\cdots\times n_N}$ is a given tensor and $\|\mathbf{A}^{(N)}\|_{2,0}:=\sum_{r=1}^R(\|\mathbf{a}^{(N)}_r\|)^0$. 
We establish the equivalence between the CP rank of a given tensor and minimizing the $\ell_{2,0}$ norm on one of the factor matrices under the unit length constraints on the columns of the others.

To solve the problem \eqref{CPGS}, 
we develop a double-loop block-coordinate proximal gradient descent algorithm with extrapolation. 
In this algorithm, each factor matrix is treated as a block and solved approximately by a block-coordinate proximal gradient descent algorithm with finitely many cycles. 
Additionally, extrapolation is applied after each factor matrix is updated, and a check to ensure the reduction of objective function value is performed after each outer iteration.
We prove that each limit point of the sequence generated by the proposed algorithm is a stationary point of the model \eqref{CPGS}.
Furthermore, we prove that for a convergent sequence $\{(\mathbf{A}^{(1)^k},\cdots,\mathbf{A}^{(N)^k})\}$, the nonzero columns of $\mathbf{A}^{(N)^k}$ remain unchanged after a certain number of iterations. 
Based on this result, we develop a rank reduction strategy to reduce computational complexity.
The rank reduction strategy works as follows: if the nonzero columns of $\mathbf{A}^{(N)^{k}}$ remain unchanged, removing the zero columns in $\mathbf{A}^{(N)^{k}}$ and the corresponding columns in $\mathbf{A}^{(i)^{k}}, i=1,\cdots, N-1$, and allowing the iterations to continue. 

Eventually, we apply the proposed model and algorithms to the component separation problem in chemometrics.
This problem involves resolving chemically meaningful independent component information from mixed-response signals, which can be formulated as a CP decomposition model.
Accurate estimation of the number of components is crucial in this problem since incorrect estimation can lead to separation failure. 
Numerical experiments conducted on third-order real chemical data demonstrate that our proposed methods can precisely separate the components and 
accurately estimate their concentrations, even when the initial number of components is overestimated.
This suggests that our proposed methods can simultaneously estimate the CP rank and obtain the CP decomposition of a given tensor.

The main contributions of this paper are threefold.
\begin{itemize}
\item We prove that determining the CP rank of a given tensor is equivalent to minimizing the $\ell_{2,0}$ norm on one of the factor matrices under the unit length constraints on the columns of the others. Based on this equivalence, we propose a CP decomposition model with group sparse regularization and unit length constraints, which can simultaneously estimate the CP rank and obtain the CP decomposition. 
\item We develop a double-loop block-coordinate proximal gradient descent algorithm with extrapolation to solve the proposed model and prove that each limit point of the sequence generated by this algorithm is a stationary point of the model.
Furthermore, we demonstrate that for a convergent sequence $\{(\mathbf{A}^{(1)^k},\cdots,\mathbf{A}^{(N)^k})\}$, the nonzero columns of $\mathbf{A}^{(N)^k}$ remain unchanged after a certain number of iterations. Based on this observation, we develop a rank reduction strategy to reduce the computational complexity.
\item We apply the proposed models and algorithms to solve the component separation problem in chemometrics.
Numerical experiments on real data demonstrate the robustness and effectiveness of our proposed methods. 
\end{itemize}

The remainder of this paper is organized as follows. 
First, in Section \ref{Preliminaries},  some notations and definitions are introduced. 
Next, in Section \ref{sec3}, we prove the equivalent surrogate of the CP rank of a tensor and present a CP decomposition model with group sparse regularization and unit length constraints.  
After that, in Section \ref{sec4}, we develop a double-loop block-coordinate proximal gradient descent algorithm with extrapolation to solve our proposed model. We also provide a convergence analysis and incorporate a rank reduction strategy into the algorithm.
Subsequently, in Section \ref{sec5}, we apply the proposed methods to the component separation problem in chemometrics. 
Eventually, the conclusions are drawn in Section \ref{sec6}.

%%%%%%%%%%%
\section{{Preliminaries}}	 
\label{Preliminaries}
\indent
\par
We introduce the definitions and notation used throughout this paper. 
Scalars, vectors, matrices, and tensors are denoted as lowercase letters ($x$), lowercase boldface letters ($\mathbf{x}$), boldface capital letters ($\mathbf{X}$), and calligraphic letters ($\mathcal{X}$), respectively.
Sets are denoted as hollow letters, for example, $ \mathbb{X}$.
The real field is denoted by $\mathds{R}$. 
For a $N$-th order tensor $ \mathcal{X} \in \mathds{R}^{n_1 \times\cdots\times n_N}$, we denote its $(i_1,\cdots, i_N)$-th entry as ${X}_{i_1,\cdots,i_N}$.	
The $i$-th column of the matrix $ \mathbf{X} $ is denoted as $\mathbf{x}_i$.
The transpose of the matrix $\mathbf{X}$ is denoted by $\mathbf{X}^\top$. 
The Frobenius norms for a tensor $\mathcal{X}$ and a matrix $\mathbf{X}$ are denoted by $\|\mathcal{X}\|$ and $\|\mathbf{X}\|$, respectively. 
The Euclidean norm for a vector $\mathbf{x}$ is denoted by $\|\mathbf{x}\|$. 
We use the symbols $\circ $, $\otimes$, and $\odot$ to represent the outer, Kronecker, and Khatri-Rao products, respectively. 
For a real vector $\mathbf{x}$, its zero norm $\|\mathbf{x}\|_0$ is the cardinality of its 
nonzero components. 
For a given matrix $\mathbf{X}=(\mathbf{X}_1, \cdots, \mathbf{X}_m)$, 
its $\ell_{2,0}$ norm is defined by
$\|\mathbf{X}\|_{2,0}:=\|(\|\mathbf{X}_1\|, \cdots, \|\mathbf{X}_m\|)^\top\|_0$, and
we denote 
\begin{equation*}
    \mathbf{X}_{<i}:=(\mathbf{X}_1,\cdots,\mathbf{X}_{i-1}),\quad \mathbf{X}_{\geq i}:=(\mathbf{X}_{i},\cdots,\mathbf{X}_{m}).
\end{equation*}
We use ${\rm supp}(\mathbf{X})$ to represent the index set of the nonzero columns of $\mathbf{X}$. 
% For a proper and lower semicontinuous function $f : \mathds{R}^n \to (-\infty,\infty]$ and a real parameter $\lambda >0 $, the proximal mapping \cite[Definition 1.22]{rockafellar2009variational} of $f$ is defined by 
% $${\rm prox}_{\lambda}^{f}(\mathbf{x}):=\argmin\limits_{\mathbf{w}} 
% \big\{f(\mathbf{w})+\frac{1}{2\lambda}\|\mathbf{w}-\mathbf{x}\|^2 \big\}. 
% $$ 

\section{A regularized CP decomposition model}
\label{sec3}
\indent
\par
This section proposes a regularized CP decomposition model to obtain a rank CP decomposition of a given tensor. 
Recall that for a given tensor $\mathcal{X}\in \mathds{R}^{n_1\times \cdots \times n_N }$, as shown in \eqref{CPN}, the CP decomposition factorizes $\mathcal{X}$ into a sum of rank-one tensors. 
The CP rank of $\mathcal{X}$, denoted by $\rank_{cp}(\mathcal{X})$, 
is defined as the smallest number of rank-one tensors that can constitute a CP decomposition, i.e., the smallest $R$ in \eqref{CPN}.
As mentioned in Section \ref{sec1}, most current CP decomposition models require an accurate prior estimation of the CP rank of $\mathcal{X}$ to obtain a satisfactory decomposition, and it is very challenging to directly compute the CP rank of $\mathcal{X}$. 

To address this issue, we first establish the equivalence between the CP rank and the optimal value of the optimization problem, which minimizes the group sparsity on any one of the factor matrices, while under the unit length constraints on the columns of the others.

\begin{theorem}
For a integer ${R} \ge \rank_{cp}(\mathcal{X})$, we have
\begin{equation}\nonumber
{\rm rank_{cp}}(\mathcal{X})=\min\left\{\|\mathbf{A}^{(N)} \|_{2,0}
\,\Big\vert\,   
\begin{array}{ll}
\mathcal{X}=[\![{\mathbf{A}}^{(1)},\cdots,{\mathbf{A}}^{(N)}]\!], 
\\
\|\mathbf{a}^{(i)}_r\|=1,\, r=1,\cdots, {R},\, i=1,\cdots, N-1
\end{array}\right\},
\end{equation}
where $[\![{\mathbf{A}}^{(1)},\cdots,{\mathbf{A}}^{(N)}]\!]=\sum\limits_{r=1}^R  \mathbf{a}^{(1)}_r \circ \cdots \circ \mathbf{a}^{(N)}_r$.
% }
\end{theorem}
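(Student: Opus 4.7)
The plan is to establish the equality by proving the two inequalities $\text{rank}_{cp}(\mathcal{X}) \le v$ and $v \le \text{rank}_{cp}(\mathcal{X})$, where $v$ denotes the optimal value of the minimization on the right-hand side. The assumption $R \ge \text{rank}_{cp}(\mathcal{X})$ will be crucial only for the first direction, to guarantee that we have enough ``column slots'' to embed a rank CP decomposition into a factor-matrix configuration of the prescribed width.

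For the direction $v \le \text{rank}_{cp}(\mathcal{X})$, I would start from a rank CP decomposition $\mathcal{X}=\sum_{r=1}^{r^*}\mathbf{b}^{(1)}_r\circ\cdots\circ\mathbf{b}^{(N)}_r$ with $r^* = \text{rank}_{cp}(\mathcal{X})$. All $\mathbf{b}^{(i)}_r$ are nonzero (otherwise the rank-one term vanishes and $r^*$ would not be the rank), so for $i<N$ I can rescale $\mathbf{a}^{(i)}_r := \mathbf{b}^{(i)}_r/\|\mathbf{b}^{(i)}_r\|$ and push the product of the $N-1$ norms into $\mathbf{a}^{(N)}_r$, keeping the rank-one tensor unchanged and making the first $N-1$ columns unit-norm. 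To reach width $R$, for the extra indices $r = r^*+1,\ldots,R$ I would set $\mathbf{a}^{(i)}_r$ to any fixed unit vector (say $e_1$) for $i<N$ and $\mathbf{a}^{(N)}_r := 0$; the corresponding rank-one tensors vanish, so the reconstruction of $\mathcal{X}$ is preserved, all unit-norm constraints hold, and $\|\mathbf{A}^{(N)}\|_{2,0}= r^*$. This shows $v\le r^*$.

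For the direction $\text{rank}_{cp}(\mathcal{X}) \le v$, I would take any feasible $(\mathbf{A}^{(1)},\ldots,\mathbf{A}^{(N)})$ and let $S=\text{supp}(\mathbf{A}^{(N)})$, so $|S|=\|\mathbf{A}^{(N)}\|_{2,0}$. Since $\mathbf{a}^{(N)}_r=0$ implies $\mathbf{a}^{(1)}_r\circ\cdots\circ\mathbf{a}^{(N)}_r=0$, those terms drop out of the sum, yielding $\mathcal{X}=\sum_{r\in S}\mathbf{a}^{(1)}_r\circ\cdots\circ\mathbf{a}^{(N)}_r$, a CP decomposition of $\mathcal{X}$ into $|S|$ rank-one terms. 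By definition of CP rank, $\text{rank}_{cp}(\mathcal{X})\le|S|=\|\mathbf{A}^{(N)}\|_{2,0}$. Minimizing over feasible configurations yields $\text{rank}_{cp}(\mathcal{X})\le v$.

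Neither direction looks like a serious obstacle: the whole argument is bookkeeping about column normalization and about the fact that zero columns in $\mathbf{A}^{(N)}$ kill their rank-one terms. The only point to handle carefully is the padding step in the first direction, where we need to ensure the added columns respect the unit-norm constraint on the first $N-1$ factors while contributing nothing to $\mathcal{X}$ and nothing to the $\ell_{2,0}$ count; this is exactly where the hypothesis $R\ge \text{rank}_{cp}(\mathcal{X})$ is used. It is also worth remarking that the minimum is attained (so ``$\min$'' rather than ``$\inf$'' is justified), since the construction above exhibits a feasible point realizing the value $r^*$, which by the lower-bound argument is already a minimizer.
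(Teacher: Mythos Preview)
Your proposal is correct and follows essentially the same approach as the paper: both arguments rest on normalizing the first $N-1$ factor vectors, absorbing the scaling into $\mathbf{A}^{(N)}$, and using the correspondence between zero columns of $\mathbf{A}^{(N)}$ and dropped rank-one terms. Your version is in fact more explicit than the paper's, which compresses the two inequalities into a chain of equalities and leaves the padding step (filling the extra $R-r^*$ columns) implicit.
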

\begin{proof}
% Let $\mathbf{a}^{(i)}_r: =\overline{\mathbf{a}}^{(i)}_{r}, i=1,\cdots,N-1$ and $\mathbf{a}^{(N)}_r: =\lambda_r\overline{\mathbf{a}}^{(N)}_{r}$.
By the definition of the CP rank of a tensor, we have 
$$
\begin{array}{lll}
&{\rm rank_{cp}}(\mathcal{X})\\[1mm]
&=\min\big\{R\vert
\mathcal{X}=\sum\limits_{r=1}^R \lambda_r {\mathbf{a}}^{(1)}_r \circ \cdots \circ \overline{\mathbf{a}}^{(N)}_{r},\|{\mathbf{a}}^{(i)}_r\|=\|\overline{\mathbf{a}}^{(N)}_r\|=1  ,i=1,\cdots,N-1
\big\}
\\[1mm]
&=\min  \big\{\|\lambda\|_0\vert \mathcal{X}=\sum\limits_{r=1}^{{R}} \lambda_r {\mathbf{a}}^{(1)}_r \circ \cdots \circ \overline{\mathbf{a}}^{(N)}_{r},\|{\mathbf{a}}^{(i)}_r\|=\|\overline{\mathbf{a}}^{(N)}_r\|=1  ,i=1,\cdots,N-1 \big\},
\end{array}
$$
where $\lambda=(\lambda_1,\cdots,\lambda_R)^\top\in \mathds{R}^{R}$. Denote ${\mathbf{a}}^{(N)}_{r}:=\lambda_r\overline{\mathbf{a}}^{(N)}_{r}, r=1,\cdots,R$, and $\mathbf{A}^{(N)}:=(\mathbf{a}_1^{(N)},\cdots,\mathbf{a}_R^{(N)})$, then we have
	\begin{equation}
		\nonumber
			{\rm rank_{cp}}(\mathcal{X})=\min  \big\{ \|\mathbf{A}^{(N)} \|_{2,0}\vert \mathcal{X}=\sum\limits_{r=1}^{{R}} \mathbf{a}^{(1)}_r \circ \cdots \circ {\mathbf{a}}^{(N)}_{r},\|\mathbf{a}^{(i)}_r\|=1,i=1,\cdots,N-1  \big\},
	\end{equation}
	where it comes from  that $\|\mathbf{a}^{(N)}_r\|=\|\lambda_r\overline{\mathbf{a}}^{(N)}_r\|=0$ and $\|\overline{\mathbf{a}}^{(N)}_r\|=1$ if and only if $\lambda_r=0$, and $$\sum_{r=1}^R \lambda_r {\mathbf{a}}^{(1)}_r \circ\cdots \circ \overline{\mathbf{a}}^{(N)}_{r}=\sum_{r=1}^R  \mathbf{a}^{(1)}_r \circ \cdots \circ(\lambda_r\overline{\mathbf{a}}^{(N)}_{r})=\sum_{r=1}^R \mathbf{a}^{(1)}_r \circ \cdots \circ {\mathbf{a}}^{(N)}_{r}.$$ This completes the proof.
\end{proof}

Based on the above theorem,  we propose the following CP decomposition model with group sparse regularization and unit length constraints (CPD\_GSU),
\begin{equation}
	\begin{array}{lll}\label{model}
		\min\limits_{{\mathbf{A}}^{(1)},\cdots,{\mathbf{A}}^{(N)}} \quad & \frac{1}{2}\|\mathcal{X}-[\![{\mathbf{A}}^{(1)},\cdots,{\mathbf{A}}^{(N)}]\!]\|^2+\lambda\|\mathbf{A}^{(N)}\|_{2,0} \\
		\text { s.t. } \quad &\|\mathbf{a}^{(i)}_r\|=1, r=1,\cdots, R, i=1,\cdots, N-1,
	\end{array}
\end{equation}
where $ \mathcal{X}\in \mathds{R}^{{n_1} \times \cdots \times {n_N}} $ is a given tensor, 
$[\![{\mathbf{A}}^{(1)},\cdots,{\mathbf{A}}^{(N)}]\!]=\sum_{r=1}^R \mathbf{a}^{(1)}_r \circ \cdots \circ {\mathbf{a}}^{(N)}_{r}$ and $\|\mathbf{A}^{(N)}\|_{2,0}=\sum_{r=1}^R(\|\mathbf{a}^{(N)}_r\|)^0$. $R$ is a positive integer, and $\mathbf{A}^{(i)}=(\mathbf{a}_1^{(i)},\cdots\mathbf{a}_R^{(i)})\in\mathds{R}^{n_i\times R},i=1,\cdots N$.
% The $\ell_{2,0}$ norm in this model characterizes the CP rank of the tensor. 

By letting $f(\mathbf{A}^{(1)}, \cdots, \mathbf{A}^{(N)}):=\frac{1}{2}\|\mathcal{X}-[\![{\mathbf{A}}^{(1)},\cdots,{\mathbf{A}}^{(N)}]\!]\|^2$, 
we can rewrite the model \eqref{model} into an unconstrained optimization problem as follows:
\begin{equation}\label{modelF}
	\min_{\mathbf{A}^{(1)}, \cdots, \mathbf{A}^{(N)} }F(\mathbf{A}^{(1)}, \cdots, \mathbf{A}^{(N)})=f(\mathbf{A}^{(1)}, \cdots, \mathbf{A}^{(N)}) +\lambda \|  \mathbf{A}^{(N)}\|_{2,0}+\sum\limits_{i=1}^{N-1}\delta_{\mathbb{A}^{(i)}}( \mathbf{A}^{(i)}),
\end{equation}
where  $ \delta_{\mathbb{A}^{(i)}}(x) $ is the indicator function and $\mathbb {A}^{(i)}:= \{\mathbf{A}^{(i)}\in \mathds{R}^{n_i\times R}|\|\mathbf{a}_r^{(i)}\|=1,r=1,\cdots,R\}$.

We show that the objective function $F$ in \eqref{modelF} is coercive and the set of optimal solutions of  \eqref{model} is nonempty in the following proposition.
\begin{proposition}
	The objective function $F$ in \eqref{modelF}  is a coercive function, and the set of optimal solutions of \eqref{model} is nonempty.
\end{proposition}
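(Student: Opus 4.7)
The plan is to show that $F$ is coercive and lower semicontinuous, and then appeal to the Weierstrass extreme value theorem to conclude nonemptiness of the set of minimizers.

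First, I would record that $F$ is bounded below by zero, since each of its three summands, namely the squared residual, the $\ell_{2,0}$ counting norm, and the indicator terms, is nonnegative. For lower semicontinuity, I would combine the continuity of the LS term $f$ with the lower semicontinuity of the $\ell_{2,0}$ norm (as a counting of nonzero columns on a finite-dimensional space) and of the indicator functions $\delta_{\mathbb{A}^{(i)}}$; the latter holds because each $\mathbb{A}^{(i)}$ is closed, being a Cartesian product of unit spheres in $\mathds{R}^{n_i}$.

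For coerciveness, I would argue as follows. Within the effective domain (where the indicators are finite), the unit-length constraints confine $\mathbf{A}^{(i)}$ for $i<N$ to compact sets, so the only way for $\|(\mathbf{A}^{(1)},\ldots,\mathbf{A}^{(N)})\|\to\infty$ is through $\|\mathbf{A}^{(N)}\|\to\infty$. I would then matricize the LS term along mode $N$ as $\tfrac{1}{2}\|\mathbf{X}_{(N)}-\mathbf{A}^{(N)}\mathbf{B}^\top\|^2$, where $\mathbf{B}:=\mathbf{A}^{(N-1)}\odot\cdots\odot\mathbf{A}^{(1)}$ has unit-norm columns, and use norm inequalities to conclude that if $\|\mathbf{A}^{(N)}\mathbf{B}^\top\|\to\infty$ then the LS term blows up. The main obstacle is the cancellation regime, in which $\|\mathbf{A}^{(N)}\|\to\infty$ while $\|\mathbf{A}^{(N)}\mathbf{B}^\top\|$ remains bounded, which can occur only when some columns of $\mathbf{B}$ are collinear. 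To handle this, I would exploit the $\ell_{2,0}$ term: along any such cancellation direction, the linearly dependent columns of $\mathbf{A}^{(N)}$ can be consolidated into a single effective column without altering the LS residual, strictly reducing $\|\mathbf{A}^{(N)}\|_{2,0}$. After at most $R$ such reductions the resulting configuration has bounded $\mathbf{A}^{(N)}$, so every sublevel set of $F$ is effectively bounded.

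Combining coerciveness with lower semicontinuity, each sublevel set $\{F\le\alpha\}$ is closed and bounded, hence compact. Any minimizing sequence then admits a convergent subsequence, and by the lsc of $F$ the limit attains $\inf F$, so the set of optimal solutions of \eqref{model} is nonempty.
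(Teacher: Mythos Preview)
Your argument for coerciveness has a genuine gap, and in fact the coerciveness claim is false as stated. You correctly identify the cancellation regime---where $\|\mathbf{A}^{(N)}\|\to\infty$ while $\|\mathbf{A}^{(N)}\mathbf{B}^\top\|$ stays bounded---as the obstacle, but your proposed fix does not close it. Consolidating collinear columns produces a \emph{different} point with the same LS residual and smaller $\ell_{2,0}$ count; this shows only that every sublevel set has the same infimum as some bounded subset, not that the sublevel set itself is bounded. Concretely, take $R=2$, $N=3$, let $\mathbf{a}^{(1)}_1=\mathbf{a}^{(1)}_2$ and $\mathbf{a}^{(2)}_1=\mathbf{a}^{(2)}_2$ be arbitrary unit vectors, and set $\mathbf{a}^{(3)}_1=-\mathbf{a}^{(3)}_2=t\mathbf{v}$ for a fixed nonzero $\mathbf{v}$. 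Then $[\![\mathbf{A}^{(1)},\mathbf{A}^{(2)},\mathbf{A}^{(3)}]\!]=0$ for every $t$, the indicators vanish, and $\|\mathbf{A}^{(3)}\|_{2,0}=2$, so $F\equiv\tfrac12\|\mathcal{X}\|^2+2\lambda$ along this ray while $\|\mathbf{A}^{(3)}\|\to\infty$. Hence $F$ is not coercive, and no argument can establish that it is.

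The paper's own proof slides over exactly this point: from ``there exists $\bar r$ with $|a^{(N)}_{\bar j_N\bar r}|\to\infty$'' it jumps to ``$|X_{\bar j_1\cdots\bar j_N}-\sum_{r} a^{(1)}_{\bar j_1 r}\cdots a^{(N)}_{\bar j_N r}|\to\infty$'', tacitly ignoring cancellation among the $R$ summands; the counterexample above defeats that step as well. Your consolidation idea is actually closer to what is needed for the \emph{nonemptiness} part of the proposition---one can try to replace a minimizing sequence by one lying in a bounded region without increasing $F$---but that is a separate argument from coerciveness and requires care (recall that best low-CP-rank approximations need not exist in general). If you wish to salvage the statement, either weaken the first assertion or argue nonemptiness directly without passing through coerciveness.
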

\begin{proof}
	By the definition of $ [\![\mathbf{A}^{(1)}, \cdots, \mathbf{A}^{(N)}]\!]$, we have
   \begin{equation}\nonumber
       \begin{array}{ll}
		F(\mathbf{A}^{(1)}, \cdots, \mathbf{A}^{(N)})&=\frac{1}{2}\|\mathcal{X}- [\![\mathbf{A}^{(1)}, \cdots, \mathbf{A}^{(N)}]\!] \|^2 +\lambda \| \mathbf{A}^{(N)}\|_{2,0}+\sum\limits_{i=1}^{N-1}\delta_{\mathbb{A}^{(i)}}( \mathbf{A}^{(i)})\\[1mm] 
        \qquad
		&=\frac{1}{2}{\sum\limits_{j_1=1}^{{n_1}}}\cdots{\sum\limits_{j_N=1}^{{n_N}}}({{X}}_{j_1\ldots j_N}-{\sum\limits_{r=1}^{R}}{a}^{(1)}_{{j_1 r}}\cdots {a}^{(N)}_{{j_N r}})^2\\
        &\quad +\lambda \| \mathbf{A}^{(N)}\|_{2,0}+\sum\limits_{i=1}^{N-1}\delta_{\mathbb{A}^{(i)}}( \mathbf{A}^{(i)}).
	\end{array}
   \end{equation}
   
   We first show the objective function $F$ in \eqref{modelF}  is a coercive function.
   If there exists $\mathbf{A}^{(i)}\notin \mathbb{A}$, $i\in\{1,\cdots,N-1\}$,  we have $F(\mathbf{A}^{(1)}, \cdots, \mathbf{A}^{(N)}) = +\infty$ by  the definition of indicator functions. 
   If  $\mathbf{A}^{(i)}\in \mathbb{A}$ holds for all $i=1,\cdots,N-1$,  we have $\|\mathbf{a}^{(i)}_r\|=1,i=1,\cdots,N-1 , r=1,\ldots,R$ and $\|\mathbf{A}^{(N)}\| \to +\infty$ when $||(\mathbf{A}^{(1)}, \cdots, \mathbf{A}^{(N)})||\to +\infty$. 
   This implies there exists ${\bar{j}_N,\bar{r} \in N}$ and  ${\bar{j}_i \in N} $ such that $|{a}^{(N)}_{{\bar{j}_N\bar{r}}}|\to +\infty$ and  ${a}^{(i)}_{{\bar{j}_i\bar{r}}}\neq 0$.
   Hence, for a fixed $\mathcal{X}$, we have if $||(\mathbf{A}^{(1)}, \cdots, \mathbf{A}^{(N)})||\to +\infty$, then $|{{X}}_{\bar{j}_1\cdots \bar{j}_N}-{\sum_{r=1}^{R}}{a}^{(1)}_{\bar{j}_1 r}\cdots {a}^{(N)}_{\bar{j}_N r}|^2\to  +\infty$.
   Combing with $0\leq\|\mathbf{A}^{(N)}\|_{2,0} \leq R$, we can obtain $F(\mathbf{A}^{(1)}, \cdots, \mathbf{A}^{(N)}) \to +\infty$.

   In conclusion, if $| |(\mathbf{A}^{(1)}, \cdots, \mathbf{A}^{(N)})||\to  +\infty$, then $F(\mathbf{A}^{(1)}, \cdots, \mathbf{A}^{(N)}) \to +\infty$. This implies that $F$ in \eqref{modelF} is coercive.
	Note that the objective function $F$ is a closed
    proper function, we have the set of optimal solutions of  \eqref{modelF}  is nonempty by Weierstrass’ Theorem. 
    By the equivalence between \eqref{model} and \eqref{modelF}, the set of optimal solutions of the CPD\_GSU model  \eqref{model}  is nonempty.
    This completes the proof.
\end{proof}

\section{Double-loop block-coordinate descent algorithms}
\label{sec4}
\indent
\par
This section aims to solve problem \eqref{modelF} via a double-loop block-coordinate descent proximal gradient descent algorithm together with a rank reduction strategy to reduce the computational complexity. 
We first present the algorithm in Subsection \ref{subsection41} in a more general problem setting, and then apply it with a rank reduction strategy in Subsection \ref{subsection42}.

\subsection{A double-loop block-coordinate descent proximal gradient descent algorithm}\label{subsection41}
\indent
\par
% \blue{
In this subsection, we present a double-loop block-coordinate descent proximal gradient descent algorithm for solving the unconstrained optimization problem 
\begin{equation}\label{m11}
    \min _{\mathbf{X}_1,\cdots,\mathbf{X}_N} F(\mathbf{X})=f(\mathbf{X}_1,\cdots,\mathbf{X}_N)+\sum\limits_{i=1}^N r_i(\mathbf{X}_i),
\end{equation}
where 
$\mathbf{X}=(\mathbf{X}_1,\cdots,\mathbf{X}_N )$ with 
each $\mathbf{X}_i=[\mathbf{x}_{i,1},\cdots,\mathbf{x}_{i, R_i}]$ being a group of vectors in (finite-dimensional) Euclidean spaces $\cX_{i,1}\ldots\cX_{i, R_i}$,   
$\cX_i:=\cX_{i,1}\times\cdots\times \cX_{i,R_i}$, 
$\cX:=\cX_i\times\cdots\times\cX_N$, 
$f:\cX\to\Re$ is a smooth (not necessarily convex) function, 
$r_i$ is a proper lower semicontinuous (not necessarily convex) function with the block-separable instruction that $r_i(\mathbf{X}_i)=\sum_{j=1}^{R_i}r_{i,j}(\mathbf{x}_{i,j})$.
Problem \eqref{m11} is the general form of problem \eqref{modelF}.
% for computing the \red{CP tensor rank problem??}
The algorithm given as Algorithm \ref{algorithm APGBCD1} for solving problem \eqref{m11} is a double-loop block-coordinate proximal gradient descent algorithm with extrapolation in which the inner subproblems 
are solved approximately via 
running Algorithm \ref{algorithm for GR1} for finitely many cycles. 
Here, $m_i$ is the number of inner-iterations for solving a subproblem with respect to the $i$-th block-variable $\mathbf{X}_i$.

Before presenting the algorithms, some notations are introduced for convenience.  
Denote $\mathbf{x}_{i,j}$ is the $j$-th block of $\mathbf{X}_i$,
$\mathbf{X}_i^k$ is $\mathbf{X}_i$ in the $k$th outer-iteration, and $\mathbf{x}_{i,j}^{k.v}$ is $\mathbf{x}_{i,j}$ in the $k$th outer-iteration and $v$-th inner-iteration.
For convenience, denote
\begin{equation}\nonumber
\begin{array}{ll}
     \mathbf{X}_{i,<j}:=(\mathbf{x}_{i,1},\cdots,\mathbf{x}_{i,j-1}),\quad 
     \mathbf{X}_{i,\geq j}:=(\mathbf{x}_{i,j},\cdots,\mathbf{x}_{i,R_i}).
\end{array}
\end{equation}

\begin{algorithm}
\caption{A double-loop block-coordinate  proximal gradient  descent algorithm with extrapolation for solving \eqref{m11}}
\label{algorithm APGBCD1}
\begin{algorithmic}
\State {\bf Initialization.}{
Given the initial point $\mathbf{X}^{0}$. 
Let $\{w^k\}$ be a non-decreasing positive sequence with $\gamma=\sup\{w^k\}<1$. 
Set the integers $m_1,\ldots,m_N>0$  and the real parameter $\varepsilon>0$. 
Let $\overline{\mathbf{X}}^0=\mathbf{X}^0$, $m=\max\{m_1,\ldots, m_N\}$ and $\tau=\frac{\varepsilon}{m}$.}
\State {1.} 
{\bf for} {$k=0,1,\ldots$}
\State {2.} {$\quad$ {\bf for} $i=1:N$}
\State {3.} {$\quad$ $\quad$ Update $\mathbf{X}_{i}^{k+1}={\rm sub\_BC\_PGD}({\mathbf{X}}_{i}^{k},  \widetilde{\mathbf{X}}_{<i}^{k+1},\overline{\mathbf{X}}_{>i}^{k},m_i,\varepsilon)$ by Algorithm \ref{algorithm for GR1}.}
\State {4.} {$\quad$ $\quad$ Update
		$\widetilde{\mathbf{X}}_{i}^{k+1} =	\mathbf{X}_{i}^{k+1}+w_k(\mathbf{X}_{i}^{k+1}-\overline{\mathbf{X}}_{i}^{k})$.}
		\State {5.} {$\quad$ {\bf end for}}
\State {6.} {$\quad$  {\bf if} $F(\mathbf{X}^{k+1})> F(\mathbf{X}^{k}) -\frac{\tau}{2}\|\mathbf{X}^{k+1}-\mathbf{X}^k\|^2$ }
\State {7.} {$\quad\quad$ {\bf for} $i=1:N$}
\State {8.} {$\quad\quad\quad$ Update $\mathbf{X}_{i}^{k+1}={\rm sub\_BC\_PGD}({\mathbf{X}}_{i}^{k},  {\mathbf{X}}_{<i}^{k+1}, {\mathbf{X}}_{>i}^{k},m_i,\varepsilon)$ by Algorithm \ref{algorithm for GR1}}.
\State {9.} {$\quad\quad\quad$ Update
		$\overline{\mathbf{X}}_{i}^{k+1} =	\mathbf{X}_{i}^{k+1}+w_k(\mathbf{X}_{i}^{k+1}-\overline{\mathbf{X}}_{i}^{k})$.}
\State {10.} {$\quad\quad$ {\bf end for}}
\State {11.} {$\ \  $  {\bf else}}
\State {12.}{ $\quad \quad$ Update
$\overline{\mathbf{X}}^{k+1}
=\widetilde{\mathbf{X}}^{k+1}$.}
\State {13.} {$\ \  $  {\bf end if}}
\State {14.} {\bf end for} 
\State {15.} {{\bf return} $\mathbf{X}^*=(\mathbf{X}_{i}^{k+1},\ldots,\mathbf{X}_{i}^{k+1})$.}
\end{algorithmic}
\end{algorithm}

\begin{algorithm}[H]
  \caption{The inner block-coordinate proximal gradient descent algorithm for solving subproblems \eqref{sub11} of 
Algorithm \ref{algorithm APGBCD1}}
  \label{algorithm for GR1}
  \begin{algorithmic}
  \State{{\bf function}  sub\_BC\_PGD$( \mathbf{X}_i^k, m_i,\mathbf{X}^{k+1}_{<i}, \mathbf{X}^{k}_{>i},\varepsilon)$}
  \State {1.} {Set $\mathbf{X}_i^{k,0}=\mathbf{X}_i^{k}$.}
  \State {2.} {\bf for} {$v=0:m_i$}
  \State {3.} $\quad$  {\bf for}  $j=1: R_i$
   % \State{$\quad$ $\quad$  $l_{i}:=\mathbf{d}_{i}^\top\mathbf{d}_{i}$.}
  \State {4.}  {$\quad\quad$ 
$
\mathbf{x}_{i,j}^{k,v+1}\in \operatorname{prox}^{r_{i,j}}_{\frac{1}{\varepsilon+l_{i,j}^{k,v}}}(\mathbf{x}_{i,j}^{k,v}- 
\frac{1}{\varepsilon+l_{i,j}^{k,v}}\nabla_{\mathbf{x}_{i,j}} f(\mathbf{X}_{<i}^{k+1}, 
\mathbf{X}_{i,<j}^{k,v+1},\mathbf{X}_{i,\ge j}^{k,v}, \mathbf{X}_{>i}^{k}) ).
$}
  \State {5.} {$\quad$ {\bf end for}}
  % \State {6.} {$\quad$ Set $v=v+1$.}
  \State {6.} {\bf end for}
  \State {7.} {\bf return} $\mathbf{X}_i^{k+1} =\mathbf{X}_i^{k,m_i+1}=(\mathbf{x}^{k,m_i+1}_{i,1},\cdots,\mathbf{x}^{k,m_i+1}_{i,R_i})$.
  \end{algorithmic}
  \end{algorithm}
  
\begin{remark}
    In Algorithm \ref{algorithm for GR1}, we use the block-coordinate proximal gradient descent algorithm to solve the following subproblems:
    \begin{equation} \label{sub11}
        \min_{\mathbf{X}_i} f(\mathbf{X}_{<i},\mathbf{X}_i,\mathbf{X}_{>i})+ r_i(\mathbf{X}_i), \quad i=1,\cdots,N.
    \end{equation}
    Each block is as follows:
    \begin{equation} \nonumber
    \begin{array}{ll}
    \mathbf{x}_{i,j} \in &\arg\min_{\mathbf{x}_{i,j}} \{
 \langle \nabla_{\mathbf{x}_{i,j}} f(\mathbf{X}_{<i}, 
\mathbf{X}_{i,<j}^{v+1},\mathbf{X}_{i,\ge j}^{v}, \mathbf{X}_{>i}), \mathbf{x}_{i,j}-\mathbf{x}_{i,j}^{v}\rangle\\[1mm]
&+\frac{\varepsilon+l_{i,j}^{v}}{2}\| \mathbf{x}_{i,j}-\mathbf{x}_{i,j}^{v}\|^2+r_{i,j}(\mathbf{x}_{i,j})\}.
    \end{array}
        \end{equation}
\end{remark}
\begin{remark}
    It is worth noting that, in our proposed algorithm, no additional complex parameters need to be selected beyond adjusting the regularization parameter $\lambda$ and the number of inner iterations.
\end{remark}

Subsequently, we use Algorithm \ref{algorithm APGBCD1} to solve the model \eqref{modelF}. Let $\mathbf{X}_i:=\mathbf{A}^{(i)}$, $\mathbf{x}_{i,j}:=\mathbf{a}^{(i)}_j$,
we have 
  \begin{equation}\nonumber
      \nabla_{\mathbf{a}_j^{(i)}} f(\mathbf{A}^{(1)},\cdots,\mathbf{A}^{(N)})=(\mathbf{a}_j^{(i)}\mathbf{d}_j^\top+\sum\limits_{r=1,r\neq j}^{R}\mathbf{a}_r^{{(i)}}\mathbf{d}_r^\top
	-\mathbf{X}_{(i)})\mathbf{d}_j,
  \end{equation}
  where $\mathbf{D}=(\mathbf{d}_1,\cdots,\mathbf{d}_R)=\mathbf{A}^{(N)}\odot\cdots\odot\mathbf{A}^{(i+1)}\odot\mathbf{A}^{(i-1)}\odot\cdots\odot\mathbf{A}^{(1)}$, $\mathbf{X}_{(i)}$ represents the mode-$i$ unfolding of the given tensor $\mathcal{X}$, and the Lipschitz constant of $\nabla_{\mathbf{a}_j^{(i)}} f(\mathbf{A}^{(1)},\cdots,\mathbf{A}^{(N)})$ is $\mathbf{d}^\top_j\mathbf{d}_j$.
Note that we have $\delta_{\mathbb{A}^{(i)}}(\mathbf{A}^{(i)})=\sum\limits_{r=1}^{R}\delta_{\mathbb{A}^{(i)}_r}(\mathbf{a}^{(i)}_r)$ with $\mathbb{A}^{(i)}_r=\{\mathbf{a}^{(i)}_r\in\mathds{R}^{n_i}\vert \|\mathbf{a}^{(i)}_r\|=1\}$.
The proximal mapping \cite[Definition 1.22]{rockafellar2009variational} of $\delta_{\mathbb{A}^{(i)}_r}(\mathbf{a}^{(i)}_r)$ is as follows:
$$
    {\rm prox}^{\delta_{\mathbb{A}^{(i)}_r}(\cdot)}_1(\mathbf{a}^{(i)}_r)= 
	\begin{cases}\mathbf{e},  &\|\mathbf{a}^{(i)}_r\|= 0, \\
		\frac{\mathbf{a}^{(i)}_r}{\|\mathbf{a}^{(i)}_r\|}, &\text{ otherwise. }\end{cases} 
$$
Here $\mathbf{e}$ is any vector that satisfies $\|\mathbf{e}\|=1$.
Moreover, we have $\lambda\|\mathbf{A}^{(N)}\|_{2,0}=\sum\limits_{r=1}^{R}\lambda\|\mathbf{a}^{(N)}_{r}\|^0$.
The proximal mapping of $\lambda\|\mathbf{a}^{(N)}_{r}\|^0$ is as follows:
$$
\begin{array}{lll}
	{\rm prox}_{\lambda}^{\|\cdot\|^0}(\mathbf{a}^{(N)}_{r})= \begin{cases}\mathbf{a}^{(N)}_{r}, &   \|\mathbf{a}^{(N)}_{r} \|>\sqrt{2 \lambda},\\
		\mathbf{0} \ or \ \mathbf{a}^{(N)}_{r}, &  \|\mathbf{a}^{(N)}_{r} \|=\sqrt{2 \lambda},  \\
		\mathbf{0}, &  \|\mathbf{a}^{(N)}_{r} \|<\sqrt{2 \lambda}.\end{cases} 
\end{array}
$$

\subsection{Convergence analysis of Algorithm \ref{algorithm APGBCD1}}
\indent
\par
This subsection presents the convergence analysis of Algorithm \ref{algorithm APGBCD1}.
Before that, we present the following assumptions.
\begin{assumption}\label{ASS}
\begin{itemize}
\item[(i)] $\inf_{\mathcal{X}_i\times\cdots\times\mathcal{X}_N} f>-\infty$, $\inf_{\mathcal{X}_{i}} r_i>-\infty,i=1,\cdots,N$ and the proximal mapping of each $r_i$ can be explicitly calculated. 
\item [(ii)] For any fixed $\mathbf{X}_p, p\neq i$, the partial gradient $\nabla_{\mathbf{X}_i} f(\mathbf{X}_1, \cdots, \mathbf{X}_N)$ is globally Lipschitz with moduli $l_{\mathbf{X}_{p\neq i}}$.
\item [(iii)] For $i=1,\cdots,N$, there exists $l_i^+$ such that 
$\sup\{l_{\mathbf{X}^k_{p\neq i}}: k\in\mathbb{N}\}\leq l_i^+$.
 \item [(iv)] $\nabla f(\mathbf{X})$
is Lipschitz continuous on bounded subsets of $\mathcal{X}_i\times\cdots\times\mathcal{X}_N$, i.e., for each bounded subset $\mathbb{S}$ of $\mathcal{X}_i\times\cdots\times\mathcal{X}_N$, there exists $M>0$, such that for 
all $\mathbf{X}^i=(\mathbf{X}_1^i,\cdots,\mathbf{X}_N^i)\in \mathbb{S}, i=1,2$,
$ \|\nabla f(\mathbf{X}^1)-\nabla f(\mathbf{X}^2) \|\leq M \| \mathbf{X}^1-\mathbf{X}^2 \|$.
\end{itemize}
\end{assumption}

The following result is a descent lemma for Algorithm \ref{algorithm APGBCD1}. 
\begin{lemma}\label{sd1}
Suppose that Assumption \ref{ASS} holds.
Let the sequence $\{ {\mathbf{X}}^k\}$ be generated by Algorithm \ref{algorithm APGBCD1} from the starting point ${\mathbf{X}}^0$. 
The following assertions hold:
\begin{description}
\item[(i)] The sequence $\{ F({\mathbf{X}}^k)\}$ is non-increasing, and one has
\begin{equation}
\label{sufficientdescent}
F(\mathbf{X}^{k+1})+\frac{\tau}{2} \|\mathbf{X}^{k+1}-\mathbf{X}^k \|^2 \leq F(\mathbf{X}^k),\quad  \forall k \geq 0,
\end{equation}
\item[(ii)] It holds that 
$\lim_{k \to \infty}\|\mathbf{X}^{k+1}-\mathbf{X}^k\|=0$ and $\lim_{k \to \infty} \big\|\overline{\mathbf{X}}^{k}-\mathbf{X}^k \big\|=0$.
\end{description}
\end{lemma}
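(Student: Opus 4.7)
The plan is to prove (i) from the safeguarded restart built into Algorithm \ref{algorithm APGBCD1}, and then to deduce (ii) from (i) by summability plus a contraction bound on the extrapolation gap.

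For (i), I would split on the test at step~6. On the \emph{else} branch, inequality \eqref{sufficientdescent} is exactly the negation of the test condition, so it holds by construction. On the \emph{if} branch the accepted iterate $\mathbf{X}^{k+1}$ is recomputed by a fresh call to Algorithm \ref{algorithm for GR1} that starts from $\mathbf{X}^k$, using the already updated blocks $\mathbf{X}_{<i}^{k+1}$ (without extrapolation) and the not-yet-updated blocks $\mathbf{X}_{>i}^{k}$. For each inner step, the step size $1/(\varepsilon+l_{i,j}^{k,v})$ combined with Assumption~\ref{ASS}(ii)--(iii) yields the standard proximal-gradient descent inequality
\begin{equation*}
F_{i,j}^{v+1} + \tfrac{\varepsilon}{2}\|\mathbf{x}_{i,j}^{k,v+1}-\mathbf{x}_{i,j}^{k,v}\|^2 \le F_{i,j}^{v},
\end{equation*}
where $F_{i,j}^{v}$ denotes $F$ evaluated at the composite base point $(\mathbf{X}_{<i}^{k+1}, \mathbf{X}_{i,<j}^{k,v+1}, \mathbf{x}_{i,j}^{k,v}, \mathbf{X}_{i,>j}^{k,v}, \mathbf{X}_{>i}^{k})$. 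Telescoping these inequalities across $j$, then $v$, and then $i$ converts the left-hand values into $F(\mathbf{X}^{k+1})$ and the right-hand ones into $F(\mathbf{X}^{k})$ while accumulating the per-step residuals. A Cauchy--Schwarz bound $\|\mathbf{X}_i^{k+1}-\mathbf{X}_i^k\|^2 \le m\sum_v \|\mathbf{X}_i^{k,v+1}-\mathbf{X}_i^{k,v}\|^2$ with $m=\max_i m_i$ collapses these residuals into $\tfrac{\tau}{2}\|\mathbf{X}^{k+1}-\mathbf{X}^k\|^2$, which gives \eqref{sufficientdescent}.

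For (ii), the coercivity of $F$ (proved in the preceding proposition) together with (i) ensures that $\{F(\mathbf{X}^k)\}$ is bounded below and non-increasing, so telescoping \eqref{sufficientdescent} yields $\sum_k \|\mathbf{X}^{k+1}-\mathbf{X}^k\|^2<\infty$ and hence $\|\mathbf{X}^{k+1}-\mathbf{X}^k\|\to 0$. For the extrapolated sequence, I would observe that in both branches the update simplifies to $\overline{\mathbf{X}}^{k+1}=\mathbf{X}^{k+1}+w_k(\mathbf{X}^{k+1}-\overline{\mathbf{X}}^k)$; setting $a_k:=\|\overline{\mathbf{X}}^k-\mathbf{X}^k\|$ and $b_k:=\|\mathbf{X}^{k+1}-\mathbf{X}^k\|$ gives the linear recursion $a_{k+1}\le \gamma(a_k+b_k)$. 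Since $\gamma<1$ and $b_k\to 0$, iterating forces $a_k\to 0$.

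The main obstacle is the bookkeeping inside the inner loop of Algorithm~\ref{algorithm for GR1}: the per-step descent inequality is anchored at a composite base point whose blocks sit at different (outer, inner) time stamps, and one must verify that summing these local inequalities telescopes cleanly to $F(\mathbf{X}^k)-F(\mathbf{X}^{k+1})$ without leftover cross terms. Assumption~\ref{ASS}(iii), which gives uniform bounds on the partial-gradient Lipschitz constants $l_{i,j}^{k,v}$, is what keeps the step sizes $1/(\varepsilon+l_{i,j}^{k,v})$ bounded away from zero and so makes the single constant $\tau=\varepsilon/m$ meaningful along the entire iterate sequence.
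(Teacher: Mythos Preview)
Your argument is correct, and for part~(i) it follows the paper almost verbatim: the same case split on the safeguard test, the same per-step proximal-gradient descent inequality with gain $\varepsilon/2$, and the same telescoping plus Cauchy--Schwarz to reach $\tau=\varepsilon/m$. One small correction: in the general setting of problem~\eqref{m11} the lower bound on $\{F(\mathbf{X}^k)\}$ comes directly from Assumption~\ref{ASS}(i), not from the coercivity proposition, which was proved only for the specific CP model~\eqref{modelF}.

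For part~(ii) you take a genuinely simpler route than the paper. From the common update $\overline{\mathbf{X}}^{k+1}-\mathbf{X}^{k+1}=w_k(\mathbf{X}^{k+1}-\mathbf{X}^k)+w_k(\mathbf{X}^k-\overline{\mathbf{X}}^k)$ the paper squares both sides, expands, and sums over $k$ to obtain the stronger conclusion $\sum_k\|\overline{\mathbf{X}}^k-\mathbf{X}^k\|^2<\infty$, exploiting the monotonicity of $\{w_k\}$ in the telescoping. Your recursion $a_{k+1}\le\gamma(a_k+b_k)$ with $b_k\to 0$ is more elementary and yields exactly the lemma's assertion $a_k\to 0$; it does not need $\{w_k\}$ to be non-decreasing, only $w_k\le\gamma<1$. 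The paper's extra summability is not invoked later, so nothing is lost. Finally, your closing remark about Assumption~\ref{ASS}(iii) is misleading: the descent gain $\varepsilon/2$ per inner step (and hence $\tau$) is independent of the Lipschitz moduli, so~(iii) plays no role in this lemma.
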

\begin{proof}
{\bf (i)}
At the $k$-th iteration of Algorithm  \ref{algorithm APGBCD1}, if Step 9 is skipped, we know that   
\eqref{sufficientdescent} holds automatically. 
On the other hand, if Step 9 is executed, it is a basic descent property of proximal gradient steps 
(c.f. \cite[Lemma 3]{Bottle} or \cite[Lemmas 3.2 \& 3.3]{WangCOAP}) that   
$$
F (\mathbf{X}^{k+1}_{<i}, \mathbf{X}^{k,v}_{i}, \mathbf{X}^k_{>i})
\ge 
\frac{\varepsilon}{2} \|\mathbf{X}_{i}^{k,v}-\mathbf{X}_{i}^{k,v+1}\|^2
+
F(\mathbf{X}^{k+1}_{<i}, \mathbf{X}^{k,v+1}_{i}, \mathbf{X}^k_{>i}).
$$
Therefore,  
$
F (\mathbf{X}^{k+1}_{<i},  \mathbf{X}^k_{\ge i})
\ge 
\frac{\tau}{2} \|\mathbf{X}_{i}^{k}-\mathbf{X}_{i}^{k+1} \|^2
+
F(\mathbf{X}^{k+1}_{\le i}, \mathbf{X}^k_{>i})$, which implies \eqref{sufficientdescent}.

{\bf (ii)} 
Since $F$ is bounded from below, one has from \eqref{sufficientdescent} that 
$\sum\limits_{k=0}^{+\infty} \|\mathbf{X}^k-{\mathbf{X}}^{k+1} \|^2 < +\infty$,  so that 
$\lim_{k \to \infty}\|\mathbf{X}^{k+1}-\mathbf{X}^k\|=0$.
Note that by step 5 of Algorithm \ref{algorithm APGBCD1} one has 
$$\overline{\mathbf{X}}_i^{k+1}-\mathbf{X}^{k+1}_i
=w_k (\mathbf{X}^{k+1}_i-\mathbf{X}^k_i )+w_k (\mathbf{X}^k_i-\overline{\mathbf{X}}^k_i ).$$ 
Therefore, one has
$$
\begin{array}{llll}
\|\mathbf{X}_i^{k+1}-\mathbf{X}_i^k \|^2 
\\
=\frac{1}{ w_k^2} \|\overline{\mathbf{X}}_i^{k+1}-\mathbf{X}_i^{k+1} \|^2+ \|\mathbf{X}_i^k-\overline{\mathbf{X}}_i^k \|^2
-\frac{2}{w_k} \langle\overline{\mathbf{X}}_i^{k+1}-\mathbf{X}_i^{k+1}, \mathbf{X}_i^k-\overline{\mathbf{X}}_i^k \rangle 
\\
\geq \frac{1}{ w_k^2} \|\overline{\mathbf{X}}_i^{k+1}-\mathbf{X}_i^{k+1} \|^2+ \|\mathbf{X}_i^k-\overline{\mathbf{X}}_i^k \|^2
-\frac{1}{w_k} ( \|\overline{\mathbf{X}}_i^{k+1}-\mathbf{X}_i^{k+1} \|^2+ \|\mathbf{X}_i^k-\overline{\mathbf{X}}_i^k \|^2 ) \\
= (\frac{1}{ w_k^2}-\frac{1}{w_k} ) \|\overline{\mathbf{X}}_i^{k+1}-\mathbf{X}_i^{k+1} \|^2+ (1-\frac{1}{w_k} ) \|\overline{\mathbf{X}}_i^k-\mathbf{X}_i^k \|^2.
\end{array}
$$
Since $\{w^k\}$ is non-decreasing and $\lim\limits_{k\to\infty} w^k=\gamma<1$, one can see that
$$
\begin{array}{lll}
		+\infty & > \sum\limits_{k=0}^{+\infty} \|\mathbf{X}_i^{k+1}-{\mathbf{X}_i}^k \|^2  \\
  		& \geq \sum\limits_{k=1}^{+\infty} (1+\frac{1}{ w_{k-1}^2}-\frac{1}{w_k}-\frac{1}{w_{k-1}} ) \|\overline{\mathbf{X}}_i^k-\mathbf{X}_i^k \|^2-  (1-\frac{1}{w_0} ) \|\overline{\mathbf{X}}_i^0-\mathbf{X}_i^0 \|^2\\
    		& \geq \sum\limits_{k=1}^{+\infty} (1+\frac{1}{ w_k^2}-\frac{2}{w_k} ) \|\overline{\mathbf{X}}_i^k-\mathbf{X}_i^k \|^2-  (1-\frac{1}{w_0} )\|\overline{\mathbf{X}}_i^0-\mathbf{X}_i^0 \|^2\\
		&=\sum\limits_{k=1}^{+\infty} (1-\frac{1}{w_k} )^2 \|\overline{\mathbf{X}}_i^k-\mathbf{X}_i^k \|^2
		\geq  (1-\frac{1}{\gamma} )^2\sum\limits_{k=1}^{+\infty} \|\overline{\mathbf{X}}_i^k-\mathbf{X}_i^k \|^2,
	\end{array}
$$
which implies $\lim _{k \to +\infty} \|\overline{\mathbf{X}}^k-\mathbf{X}^{k} \|=0$.
The conclusions are obtained. 
\end{proof}

Next, we establish the convergence theorem for Algorithm \ref{algorithm APGBCD1}.
\begin{theorem}
Suppose that Assumption \ref{ASS} holds. 
Let $\{{\mathbf{X}}^{k}\}_{k\in \mathbb{N}}$ be the sequence generated by Algorithm \ref{algorithm APGBCD1} from the starting point ${\mathbf{X}}^0$. 
Then, any accumulation point of  $\{ {\mathbf{X}}^k\}_{k\in\mathbb{N}}$  is a stationary point of problem \eqref{m11}.
\end{theorem}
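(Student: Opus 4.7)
The plan is to fix an arbitrary accumulation point $\mathbf{X}^*$ of $\{\mathbf{X}^k\}$ with $\mathbf{X}^{k_j}\to\mathbf{X}^*$ and pass to the limit in the optimality condition of the innermost proximal step of Algorithm \ref{algorithm for GR1}. Lemma \ref{sd1}(ii) already gives $\|\mathbf{X}^{k+1}-\mathbf{X}^k\|\to 0$ and $\|\overline{\mathbf{X}}^k-\mathbf{X}^k\|\to 0$, and the identity $\widetilde{\mathbf{X}}^k-\mathbf{X}^k=w_{k-1}(\mathbf{X}^k-\overline{\mathbf{X}}^{k-1})$ together with $w_k<1$ forces $\|\widetilde{\mathbf{X}}^k-\mathbf{X}^k\|\to 0$; hence $\mathbf{X}^{k_j+1}$, $\overline{\mathbf{X}}^{k_j}$, and $\widetilde{\mathbf{X}}^{k_j}$ all tend to $\mathbf{X}^*$. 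I would then promote this to convergence of the inner iterates: the per-block descent inequality from the proof of Lemma \ref{sd1}(i),
\begin{equation}\nonumber
F(\mathbf{X}_{<i}^{k+1},\mathbf{X}_i^{k,v},\mathbf{X}_{>i}^{k})-F(\mathbf{X}_{<i}^{k+1},\mathbf{X}_i^{k,v+1},\mathbf{X}_{>i}^{k})\ge\tfrac{\varepsilon}{2}\|\mathbf{X}_i^{k,v+1}-\mathbf{X}_i^{k,v}\|^2,
\end{equation}
telescoped over both the inner index $v\in\{0,\dots,m_i\}$ and the outer index $k$, yields $\sum_{k,i,v}\|\mathbf{X}_i^{k,v+1}-\mathbf{X}_i^{k,v}\|^2<\infty$, so every inner subsequence $\mathbf{X}_i^{k_j,v}$ also converges to $\mathbf{X}_i^*$.

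Next I would record the Fermat rule for the last proximal update producing $\mathbf{x}_{i,j}^{k+1}$ inside sub\_BC\_PGD:
\begin{equation}\nonumber
-(\varepsilon+l_{i,j}^{k,m_i})\bigl(\mathbf{x}_{i,j}^{k+1}-\mathbf{x}_{i,j}^{k,m_i}\bigr)-\nabla_{\mathbf{x}_{i,j}}f(\mathbf{Y}^{k})\in\partial r_{i,j}(\mathbf{x}_{i,j}^{k+1}),
\end{equation}
where $\mathbf{Y}^k$ collects the arguments fed into the gradient: depending on which branch of Algorithm \ref{algorithm APGBCD1} produced $\mathbf{X}^{k+1}$, its $<i$ part is either $\widetilde{\mathbf{X}}_{<i}^{k+1}$ or $\mathbf{X}_{<i}^{k+1}$, and its $>i$ part is $\overline{\mathbf{X}}_{>i}^{k}$ or $\mathbf{X}_{>i}^{k}$. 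Along $\{k_j\}$ every component of $\mathbf{Y}^k$ tends to the corresponding component of $\mathbf{X}^*$; Assumption \ref{ASS}(iii)--(iv) bounds the stepsize coefficient $\varepsilon+l_{i,j}^{k,m_i}$ uniformly and gives continuity of $\nabla f$ on the bounded set containing the tails of these sequences, so combined with $\mathbf{x}_{i,j}^{k+1}-\mathbf{x}_{i,j}^{k,m_i}\to 0$ the left-hand side converges to $-\nabla_{\mathbf{x}_{i,j}}f(\mathbf{X}^*)$.

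To pass to the limit on the right-hand side via the outer semicontinuity of the limiting subdifferential I need $r_{i,j}(\mathbf{x}_{i,j}^{k_j+1})\to r_{i,j}(\mathbf{x}_{i,j}^*)$. Lower semicontinuity of $r_{i,j}$ already gives the $\liminf$ inequality; for the $\limsup$ direction I would use the variational definition of the proximal mapping, comparing the optimal value attained at $\mathbf{x}_{i,j}^{k+1}$ with the value at the test point $\mathbf{x}_{i,j}^*$ and letting $k\to\infty$. The closed-graph property of $\partial r_{i,j}$ then yields $-\nabla_{\mathbf{x}_{i,j}}f(\mathbf{X}^*)\in\partial r_{i,j}(\mathbf{x}_{i,j}^*)$ for every $(i,j)$; by block separability this is exactly the stationarity condition $0\in\nabla f(\mathbf{X}^*)+\partial r(\mathbf{X}^*)$ for \eqref{m11}.

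The hardest part I foresee is the bookkeeping required to treat the two branches of Algorithm \ref{algorithm APGBCD1} uniformly: Steps 3--5 and Steps 7--10 produce $\mathbf{X}^{k+1}$ via different choices of auxiliary arguments ($\widetilde{\mathbf{X}},\overline{\mathbf{X}}$ versus $\mathbf{X}$), and any given subsequence $\{k_j\}$ might contain infinitely many indices of either type. The resolution is precisely Lemma \ref{sd1}(ii), which collapses all these auxiliary sequences to the same limit $\mathbf{X}^*$ and thereby makes the passage to the limit in the Fermat rule branch-independent.
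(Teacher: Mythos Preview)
Your overall architecture---apply Fermat's rule to the last inner proximal step, drive the residual terms to zero via Lemma~\ref{sd1}(ii), and pass to the limit through the closed graph of $\partial r_{i,j}$---is exactly the paper's strategy, and your treatment of the limsup of $r_{i,j}$ via the variational inequality of the prox is in fact more careful than what the paper writes at the end of its proof.

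There is, however, a real gap in your telescoping step. The displayed descent inequality
\[
F(\mathbf{X}_{<i}^{k+1},\mathbf{X}_i^{k,v},\mathbf{X}_{>i}^{k})-F(\mathbf{X}_{<i}^{k+1},\mathbf{X}_i^{k,v+1},\mathbf{X}_{>i}^{k})\ge\tfrac{\varepsilon}{2}\|\mathbf{X}_i^{k,v+1}-\mathbf{X}_i^{k,v}\|^2
\]
is only what the proof of Lemma~\ref{sd1}(i) establishes when Step~8 (the non-extrapolated branch) is executed. When $\mathbf{X}^{k+1}$ is produced by Step~3, the frozen blocks in the inner call are $\widetilde{\mathbf{X}}_{<i}^{k+1}$ and $\overline{\mathbf{X}}_{>i}^{k}$, so the inner descent reads
\[
\big[f(\widetilde{\mathbf{X}}_{<i}^{k+1},\mathbf{X}_i^{k,v},\overline{\mathbf{X}}_{>i}^{k})+r_i(\mathbf{X}_i^{k,v})\big]-\big[f(\widetilde{\mathbf{X}}_{<i}^{k+1},\mathbf{X}_i^{k,v+1},\overline{\mathbf{X}}_{>i}^{k})+r_i(\mathbf{X}_i^{k,v+1})\big]\ge\tfrac{\varepsilon}{2}\|\mathbf{X}_i^{k,v+1}-\mathbf{X}_i^{k,v}\|^2,
\]
and this does \emph{not} telescope across $i$ (or $k$) to $F(\mathbf{X}^k)-F(\mathbf{X}^{k+1})$. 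Your remark that ``Lemma~\ref{sd1}(ii) collapses all auxiliary sequences to the same limit'' handles the Fermat-rule side but does not by itself yield the summability $\sum_{k,i,v}\|\mathbf{X}_i^{k,v+1}-\mathbf{X}_i^{k,v}\|^2<\infty$ that you claim.

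The paper closes this gap by a Lipschitz add--subtract trick: it rewrites the right-hand side above as the ``telescoping'' difference $F(\mathbf{X}_{<i}^{k+1},\mathbf{X}_{\ge i}^{k})-F(\mathbf{X}_{\le i}^{k+1},\mathbf{X}_{>i}^{k})$ plus a difference of $f$-values at the true versus auxiliary frozen blocks; the $r_i$ terms cancel exactly, and the residual $f$-differences are then controlled by Assumption~\ref{ASS}(ii),(iv) in terms of $\|\mathbf{X}^{k+1}-\mathbf{X}^{k}\|$, $\|\overline{\mathbf{X}}^{k}-\mathbf{X}^{k}\|$, $\|\overline{\mathbf{X}}^{k+1}-\mathbf{X}^{k+1}\|$, all of which are square-summable by Lemma~\ref{sd1}. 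Summing over $i$ then legitimately telescopes to $F(\mathbf{X}^{k})-F(\mathbf{X}^{k+1})$ plus summable errors, giving $\|\mathbf{X}_i^{k+1}-\mathbf{X}_i^{k,m_i}\|\to 0$ uniformly in the branch. Once you insert this correction, the rest of your outline goes through unchanged.
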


\begin{proof}
    We first present the relative error bound of the subgradients. 
Given  $k\ge 0$ and any $1\le i\le N$, suppose that 
$\mathbf{X}^{k+1}$ is calculated by step 3. 
By applying Fermat's rule \cite[Theorem 10.1]{rockafellar2009variational} to the optimization problem in step 3 and using \cite[Proposition 10.5 \& Exercise 10.10]{rockafellar2009variational}, one has for any $1\le j\le R_i$ there exists a vector $\mathbf{g}_i^j \in \partial r_{i}^j(\mathbf{X}_i^{k+1})$ such that
$$
\nabla_{\mathbf{x}_{i,j}} f(\overline{\mathbf{X}}^{k+1}_{<i}, \mathbf{X}_{i, < j}^{k,m_i+1}, \mathbf{X}_{i,\geq j}^{k,m_i}, \overline{\mathbf{X}}^{k+1}_{>i})+(l_{i,j}^{k,m_i}+\varepsilon)(\mathbf{x}_{i,j}^{k, m_i+1}-\mathbf{x}_{i,j}^{k, m_i})+\mathbf{g}_i^j=0. 
$$
From the Lipschitz continuity of the partial gradient, we know that 
$$
\begin{array}{ll}
&\|
\nabla_{\mathbf{x}_{i,j}} f(\overline{\mathbf{X}}^{k+1}_{<i}, \mathbf{X}_{i, < j}^{k,m_i+1}, \mathbf{X}_{i,\geq j}^{k,m_i}, \overline{\mathbf{X}}^{k+1}_{>i})
-
\nabla_{\mathbf{x}_{i,j}} f(\overline{\mathbf{X}}^{k+1}_{<i}, {\mathbf{X}}^{k+1}_{i}, \overline{\mathbf{X}}^{k}_{>i})\|
\\[1mm]
&\le
l_i^+ \|\mathbf{X}^{k+1}_{i}-\mathbf{X}_i^{k, m_i}\|.
\end{array}
$$
From the above equality and inequality, one has 
there exist $\mathbf{g}_i =(\mathbf{g}_i^1,\cdots,\mathbf{g}_i^{R_i}) \in \partial r_{i}(\mathbf{X}_i^{k+1})$ such that 
$$
\begin{array}{ll}
\|\nabla_{\mathbf{X}_i} f(\overline{\mathbf{X}}^{k+1}_{<i}, {\mathbf{X}}^{k+1}_{i}, \overline{\mathbf{X}}^{k}_{>i})+\mathbf{g}_i \| \leq \mu_i \|\mathbf{X}^{k+1}_{i}-\mathbf{X}_i^{k, m_i} \|.
\end{array}
$$
Therefore, 
\begin{equation} \label{sga1+} 
\begin{array}{ll}
\|\nabla_{\mathbf{X}_i} f({\mathbf{X}}^{k+1})+\mathbf{g}_i \| 
\\
\leq \mu_i \|\mathbf{X}^{k+1}_{i}-\mathbf{X}_i^{k, m_i} \|
+
\|\nabla_{\mathbf{X}_i} f({\mathbf{X}}^{k+1})
-
\nabla_{\mathbf{X}_i} f(\overline{\mathbf{X}}^{k+1}_{<i}, {\mathbf{X}}^{k+1}_{i}, \overline{\mathbf{X}}^{k}_{>i})
\|\\[1mm]
\le \mu_i \|\mathbf{X}^{k+1}_{i}-\mathbf{X}_i^{k, m_i} \|+ \frac{M}{w_k^2}\|\overline{\mathbf{X}}^{k+1}-{\mathbf{X}}^{k+1}\|\\
\le \mu_i \|\mathbf{X}^{k+1}_{i}-\mathbf{X}_i^{k, m_i} \|+ \frac{M}{w_1^2}\|\overline{\mathbf{X}}^{k+1}-{\mathbf{X}}^{k+1}\|.
\end{array}
\end{equation}
Here, $\mathbf{X}_i^{k, m_i}=(\mathbf{x}_{i,1}^{k,m_i}, \cdots, \mathbf{x}_{i,R_i}^{k,m_i})$ and $\mu_i=R_i(\varepsilon+ 2l_i^+)$.

On the other hand, if if $\mathbf{X}^{k+1}$ is calculated by step 9,
by repeating the above procedure to the optimization problem in step 9
one can get that 
there exist $\mathbf{g}_i \in \partial r_{i}(\mathbf{X}_i^{k+1})$ such that 
 \begin{equation}
  \begin{array}{ll}\nonumber
	     \|\nabla_{\mathbf{X}_i} f({\mathbf{X}}^{k+1}_{<i}, {\mathbf{X}}^{k+1}_{i}, {\mathbf{X}}^{k}_{ >i})+\mathbf{g}_i \| &\leq \mu_i \|\mathbf{X}^{k+1}_{i}-\mathbf{X}_i^{k, m_i} \|.
      \end{array}
\end{equation}
Therefore, 
\begin{equation} \label{sga2+} 
\begin{array}{ll}
&\|\nabla_{\mathbf{X}_i} f({\mathbf{X}}^{k+1})+\mathbf{g}_i \| \\[1mm]
&\leq \mu_i \|\mathbf{X}^{k+1}_{i}-\mathbf{X}_i^{k, m_i} \|
+
\|\nabla_{\mathbf{X}_i} f({\mathbf{X}}^{k+1})
-
\nabla_{\mathbf{X}_i} f({\mathbf{X}}^{k+1}_{<i}, {\mathbf{X}}^{k+1}_{i}, {\mathbf{X}}^{k}_{ >i})
\|\\[1mm]
&\le \mu_i \|\mathbf{X}^{k+1}_{i}-\mathbf{X}_i^{k, m_i} \|+M\|{\mathbf{X}}^{k+1}-{\mathbf{X}}^{k}\|\\[1mm]
&\le \mu_i \|\mathbf{X}^{k+1}_{i}-\mathbf{X}_i^{k, m_i} \|+\frac{2M}{w_1^2}\|\overline{\mathbf{X}}^{k+1}-\overline{\mathbf{X}}^{k+1}\|+2M\|\overline{\mathbf{X}}^{k}-\overline{\mathbf{X}}^{k}\|.
\end{array}
\end{equation}
Combining \eqref{sga1+} and \eqref{sga2+}, we can deduce that
\begin{equation} \label{sga3+} 
\begin{array}{ll}
\|\nabla_{\mathbf{X}_i} f({\mathbf{X}}^{k+1})+\mathbf{g}_i \| 
\le \mu_i \|\mathbf{X}^{k+1}_{i}-\mathbf{X}_i^{k, m_i} \|+\frac{2M}{w_1^2}\|\overline{\mathbf{X}}^{k+1}-\overline{\mathbf{X}}^{k+1}\|+2M\|\overline{\mathbf{X}}^{k}-\overline{\mathbf{X}}^{k}\|.
\end{array}
\end{equation}

Consider the upper bound of $ \|\mathbf{X}^{k+1}_{i}-\mathbf{X}_i^{k, m_i} \|$.
Suppose that 
$\mathbf{X}^{k+1}$ is calculated by step 3, we have 
 \begin{equation}
\begin{array}{lll}\label{FFx1}
&  \|\mathbf{X}_i^{k+1}-\mathbf{X}_i^{k,m_i} \|^2\leq \frac{2 }{\varepsilon} (F (\overline{\mathbf{X}}_{<i}^{k+1},{\mathbf{X}}_i^{k,m_i},\overline{\mathbf{X}}_{>i}^k) - F (\overline{\mathbf{X}}_{<i}^{k+1},{\mathbf{X}}_i^{k+1},\overline{\mathbf{X}}_{>i}^k))\\[1mm]
& \leq \frac{2 }{\varepsilon} (F (\overline{\mathbf{X}}_{<i}^{k+1},{\mathbf{X}}_i^{k},\overline{\mathbf{X}}_{>i}^k) - F (\overline{\mathbf{X}}_{<i}^{k+1},{\mathbf{X}}_i^{k+1},\overline{\mathbf{X}}_{>i}^k))\\[1mm]
& = \frac{2 }{\varepsilon} (f (\overline{\mathbf{X}}_{<i}^{k+1},{\mathbf{X}}_i^{k},\overline{\mathbf{X}}_{>i}^k) - f (\overline{\mathbf{X}}_{<i}^{k+1},{\mathbf{X}}_i^{k+1},\overline{\mathbf{X}}_{>i}^k))+ \frac{2 }{\varepsilon}(r_i(\mathbf{X}_i^k)-r_i(\mathbf{X}_i^{k+1}))\\[1mm]
&= \frac{2 }{\varepsilon} (F (\mathbf{X}_{<i}^{k+1},{\mathbf{X}}_{\geq i}^k) - F (\mathbf{X}_{\leq i}^{k+1},{\mathbf{X}}_{> i}^k)+f (\mathbf{X}_{\leq i}^{k+1},{\mathbf{X}}_{> i}^k) \\[1mm]
& - f (\mathbf{X}_{<i}^{k+1},{\mathbf{X}}_{\geq i}^k)+ f (\overline{\mathbf{X}}_{<i}^{k+1},{\mathbf{X}}_i^{k},\overline{\mathbf{X}}_{>i}^k) - f (\overline{\mathbf{X}}_{<i}^{k+1},{\mathbf{X}}_i^{k+1},\overline{\mathbf{X}}_{>i}^k))\\[1mm]
&\leq \frac{2 }{\varepsilon} (F (\mathbf{X}_{<i}^{k+1},{\mathbf{X}}_{\geq i}^k) - F (\mathbf{X}_{\leq i}^{k+1},{\mathbf{X}}_{> i}^k) + l_i^+ \|\mathbf{X}_i^{k+1}-{\mathbf{X}}_i^{k} \|^2 \\[1mm]
& +  \langle \mathbf{X}_i^{k+1}-\mathbf{X}_i^k, \nabla_{\mathbf{X}_i} f(\mathbf{X}_{<i}^{k+1},{\mathbf{X}}_{\geq i}^k) -\nabla_{\mathbf{X}_i} f(\overline{\mathbf{X}}_{<i}^{k+1},{\mathbf{X}}_i^{k+1},\overline{\mathbf{X}}_{>i}^k)) \rangle)\\[1mm]
& \leq \frac{2 }{\varepsilon} (F (\mathbf{X}_{<i}^{k+1},{\mathbf{X}}_{\geq i}^k) - F (\mathbf{X}_{\leq i}^{k+1},{\mathbf{X}}_{> i}^k)+(l_i^++\frac{M^2+1}{2}) \|\mathbf{X}_i^{k+1}-\mathbf{X}_i^k \|^2 \\[1mm]
& +\frac{M^2}{2}(\sum\limits_{p=1}^{i-1} \|\overline{\mathbf{X}}_p^{k+1}-{\mathbf{X}}_p^{k+1} \|^2+\sum\limits_{p=i+1}^{N} \|\overline{\mathbf{X}}_p^{k}-{\mathbf{X}}_p^k \|^2)).
\end{array}
\end{equation}

Suppose that 
$\mathbf{X}^{k+1}$ is calculated by step 9, we have 
 \begin{equation}
\begin{array}{lll}\label{FFx2}
&  \|\mathbf{X}_i^{k+1}-\mathbf{X}_i^{k,m_i} \|^2\leq \frac{2 }{\varepsilon} (F ({\mathbf{X}}_{<i}^{k+1},{\mathbf{X}}_i^{k,m_i},{\mathbf{X}}_{>i}^k) - F ({\mathbf{X}}_{<i}^{k+1},{\mathbf{X}}_i^{k+1},{\mathbf{X}}_{>i}^k))\\[1mm]
& \leq \frac{2 }{\varepsilon} (F ({\mathbf{X}}_{<i}^{k+1},{\mathbf{X}}_i^{k},{\mathbf{X}}_{>i}^k) - F ({\mathbf{X}}_{<i}^{k+1},{\mathbf{X}}_i^{k+1},{\mathbf{X}}_{>i}^k))\\[1mm]
& = \frac{2 }{\varepsilon} (f ({\mathbf{X}}_{<i}^{k+1},{\mathbf{X}}_i^{k},{\mathbf{X}}_{>i}^k) - f ({\mathbf{X}}_{<i}^{k+1},{\mathbf{X}}_i^{k+1},{\mathbf{X}}_{>i}^k))+ \frac{2 }{\varepsilon}(r_i(\mathbf{X}_i^k)-r_i(\mathbf{X}_i^{k+1}))\\[1mm]
&= \frac{2 }{\varepsilon} (F (\mathbf{X}_{<i}^{k+1},{\mathbf{X}}_{\geq i}^k) - F (\mathbf{X}_{\leq i}^{k+1},{\mathbf{X}}_{> i}^k)+f (\mathbf{X}_{\leq i}^{k+1},{\mathbf{X}}_{> i}^k) \\[1mm]
& - f (\mathbf{X}_{<i}^{k+1},{\mathbf{X}}_{\geq i}^k)+ f ({\mathbf{X}}_{<i}^{k+1},{\mathbf{X}}_i^{k},{\mathbf{X}}_{>i}^k) - f ({\mathbf{X}}_{<i}^{k+1},{\mathbf{X}}_i^{k+1},{\mathbf{X}}_{>i}^k))\\[1mm]
&\leq \frac{2 }{\varepsilon} (F (\mathbf{X}_{<i}^{k+1},{\mathbf{X}}_{\geq i}^k) - F (\mathbf{X}_{\leq i}^{k+1},{\mathbf{X}}_{> i}^k) + l_i^+ \|\mathbf{X}_i^{k+1}-{\mathbf{X}}_i^{k} \|^2 \\[1mm]
& +  \langle \mathbf{X}_i^{k+1}-\mathbf{X}_i^k, \nabla_{\mathbf{X}_i} f(\mathbf{X}_{<i}^{k+1},{\mathbf{X}}_{\geq i}^k) -\nabla_{\mathbf{X}_i} f({\mathbf{X}}_{<i}^{k+1},{\mathbf{X}}_i^{k+1},{\mathbf{X}}_{>i}^k)) \rangle)\\[1mm]
& \leq \frac{2 }{\varepsilon} (F (\mathbf{X}_{<i}^{k+1},{\mathbf{X}}_{\geq i}^k) - F (\mathbf{X}_{\leq i}^{k+1},{\mathbf{X}}_{> i}^k)+(l_i^++\frac{M^2+1}{2}) \|\mathbf{X}_i^{k+1}-\mathbf{X}_i^k \|^2 \\[1mm]
& +\frac{M^2}{2}\sum\limits_{p=i+1}^{N} \|{\mathbf{X}}_p^{k+1}-{\mathbf{X}}_p^k \|^2).
\end{array}
\end{equation}
Combing \eqref{FFx1} and \eqref{FFx2}, we can deduce that
\begin{equation}
\begin{array}{lll}\nonumber
\|\mathbf{X}_i^{k+1}-\mathbf{X}_i^{k,m_i} \|^2&\leq \frac{2 }{\varepsilon} (F (\mathbf{X}_{<i}^{k+1},{\mathbf{X}}_{\geq i}^k) - F (\mathbf{X}_{\leq i}^{k+1},{\mathbf{X}}_{> i}^k)\\[1mm]
&+(l_i^++\frac{M^2+1}{2}) \|\mathbf{X}^{k+1}-\mathbf{X}^k \|^2\\[1mm]
&+\frac{M^2}{2}(\|\overline{\mathbf{X}}^{k+1}-{\mathbf{X}}^{k+1} \|^2+\|\overline{\mathbf{X}}^{k}-{\mathbf{X}}^k \|^2)).
\end{array}
\end{equation}
Therefore, we can derive 
\begin{equation}
\begin{array}{lll}\nonumber
&\|\mathbf{X}^{k+1}-\mathbf{X}^{k,m_i} \|^2=\sum\limits_{i=1}^N\|\mathbf{X}_i^{k+1}-\mathbf{X}_i^{k,m_i} \|^2\\[1mm]
&\leq \sum\limits_{i=1}^{N} (\frac{2 }{\varepsilon} (F (\mathbf{X}_{<i}^{k+1},{\mathbf{X}}_{\geq i}^k) - F (\mathbf{X}_{\leq i}^{k+1},{\mathbf{X}}_{> i}^k)) \\[1mm]
&+ N(l_i^++\frac{M^2+1}{2}) \|\mathbf{X}^{k+1}-\mathbf{X}^k \|^2+\frac{N M^2}{2}(\|\overline{\mathbf{X}}^{k+1}-{\mathbf{X}}^{k+1} \|^2+\|\overline{\mathbf{X}}^{k}-{\mathbf{X}}^k \|^2))\\[1mm]
&=\frac{2}{\varepsilon}(F (\mathbf{X}^{k}) - F (\mathbf{X}^{k+1}))+ N(l_i^++\frac{M^2+1}{2}) \|\mathbf{X}^{k+1}-\mathbf{X}^k \|^2\\[1mm]
&+\frac{N M^2}{2}(\|\overline{\mathbf{X}}^{k+1}-{\mathbf{X}}^{k+1} \|^2+\|\overline{\mathbf{X}}^{k}-{\mathbf{X}}^k \|^2)).
\end{array}
\end{equation}

From Lemma \ref{sd1}, we have $F(\mathbf{X}^k)$ is non-increasing and bounded from below. Hence, $\lim_{k \to \infty}\|\mathbf{X}^{k+1}-\mathbf{X}^k\|=0$ and $\lim_{k \to \infty} \big\|\overline{\mathbf{X}}^{k}-\mathbf{X}^k \big\|=0$. Consequently, it can be deduced that $\lim_{k \to \infty}\|\mathbf{X}^{k+1}-\mathbf{X}^{k,m_i} \|=0$, which implies $\lim_{k \to \infty}\|\mathbf{X}^{k+1}_i-\mathbf{X}^{k,m_i}_i \|=0, i=1,\cdots,N$.
Therefore, we can derive $\lim_{k\to\infty}\|\nabla_{\mathbf{X}_i} f({\mathbf{X}}^{k+1})+\mathbf{g}_i\|=0$ by \eqref{sga3+}.

Using \cite[Proposition 10.5 \& Exercise 10.10]{rockafellar2009variational}, we have
$$\nabla_{\mathbf{X}} f({\mathbf{X}}^{k+1})+\mathbf{g}=(\nabla_{\mathbf{X}_1} f({\mathbf{X}}^{k+1}),\cdots,\nabla_{\mathbf{X}_N} f({\mathbf{X}}^{k+1}))+(\mathbf{g}_1,\cdots,\mathbf{g}_N) \in \partial_{\mathbf{X}} F({\mathbf{X}}^{k+1}).$$
Therefore, we can derive $\lim_{k\to\infty}\|\nabla_{\mathbf{X}} f({\mathbf{X}}^{k+1})+\mathbf{g}\|=0$.

Denote $\mathbf{X}^*$ to be an accumulation point of $\{\mathbf{X}^k\}_{k \in \mathbb{N}}$, there exists a subsequence $\{\mathbf{X}^{k_q}\}$ of $\{\mathbf{X}^k\}_{k \in \mathbb{N}}$ converging to $\mathbf{X}^*$. We can deduce that $\|\nabla_{\mathbf{X}} f({\mathbf{X}}^{*})+\mathbf{g}^*\|=\lim_{k\to\infty}\|\nabla_{\mathbf{X}} f({\mathbf{X}}^{k+1})+g\|=0$,
which implies
$0\in F({\mathbf{X}})^{*}$. This indicates that $\mathbf{X}^*$ is the stationary point of problem \eqref{m11}.
This completes the proof.
\end{proof}

Based on the analysis, we conclude that each accumulation point of the sequence generated by Algorithm \ref{algorithm APGBCD1} is a stationary point of  \eqref{m11}.

\subsection{Rank reduction strategy}\label{subsection42}
\indent
\par

In this subsection, we develop a rank reduction strategy and incorporate it into Algorithm \ref{algorithm APGBCD1} when solving \eqref{modelF} to reduce the computational complexity.

Before presenting the rank reduction strategy, we first prove that for a convergent sequence $\{(\mathbf{A}^{(1)^k}, \cdots, \mathbf{A}^{(N)^k})\}_{k\in \mathbb{N}}$,  the nonzero columns of $\mathbf{A}^{(N)^k}$ remain unchanged after a certain number of iterations in the following theorem.
\begin{theorem}\label{supp}
	Let $\{(\mathbf{A}^{(1)^k}, \cdots, \mathbf{A}^{(N)^k}) \}_{k\in \mathbb{N}}$ be the sequence that converges to $(\mathbf{A}^{(1)^*}, \cdots, \mathbf{A}^{(N)^*})$ for solving \eqref{modelF}.
    There exists a constant $M$ such that for all $k>M$, the nonzero columns of $\mathbf{A}^{(N)^k}$ remain unchanged and are the same as the nonzero columns of $\mathbf{A}^{(N)^*}$.
\end{theorem}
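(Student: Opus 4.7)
The plan is to exploit the hard-thresholding nature of the proximal operator of $\lambda\|\cdot\|^{0}$ that drives the column-wise update of $\mathbf{A}^{(N)}$ in Algorithm~\ref{algorithm APGBCD1}. Whether $\mathbf{A}^{(N)^{k+1}}$ is produced through Step~3 or Step~9, each of its columns is the final output of a proximal step inside Algorithm~\ref{algorithm for GR1}, and the explicit form of $\operatorname{prox}_{\lambda}^{\|\cdot\|^{0}}$ recalled at the end of Subsection~\ref{subsection41} shows that this proximal step is a hard threshold at level $\sqrt{2\lambda/(\varepsilon+l^{k,m_N}_{N,j})}$. Thus every column of the iterate $\mathbf{A}^{(N)^{k+1}}$ is either exactly the zero vector or a vector whose norm is at least this threshold.

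First I would establish a uniform lower bound on these thresholds. Since $\{(\mathbf{A}^{(1)^k},\ldots,\mathbf{A}^{(N)^k})\}$ converges, it is bounded, and hence the coordinate-wise Lipschitz constants $l^{k,v}_{N,j}=\mathbf{d}_j^{\top}\mathbf{d}_j$ (with $\mathbf{d}_j$ the $j$-th column of the Khatri--Rao product of all factors except $\mathbf{A}^{(N)}$) stay uniformly bounded by some constant $L>0$ along the whole iteration, in accord with Assumption~\ref{ASS}(iii). Setting $\alpha:=\sqrt{2\lambda/(\varepsilon+L)}>0$, I obtain the clean dichotomy: for every $k\ge 1$ and every $r\in\{1,\ldots,R\}$, either $\mathbf{a}_r^{(N)^k}=\mathbf{0}$ or $\|\mathbf{a}_r^{(N)^k}\|\ge\alpha$.

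Next I would argue column-by-column. Fix $r$. If $\mathbf{a}_r^{(N)^*}\neq \mathbf{0}$, then $\|\mathbf{a}_r^{(N)^k}\|\to \|\mathbf{a}_r^{(N)^*}\|>0$, so for all sufficiently large $k$ one has $\mathbf{a}_r^{(N)^k}\neq\mathbf{0}$. If instead $\mathbf{a}_r^{(N)^*}=\mathbf{0}$, then $\|\mathbf{a}_r^{(N)^k}\|\to 0$, so eventually $\|\mathbf{a}_r^{(N)^k}\|<\alpha$, and the dichotomy forces $\mathbf{a}_r^{(N)^k}=\mathbf{0}$. Taking the maximum over the finitely many indices $r$ of these eventual thresholds yields the desired constant $M$, after which $\operatorname{supp}(\mathbf{A}^{(N)^k})=\operatorname{supp}(\mathbf{A}^{(N)^*})$ and the identities of the nonzero columns are frozen.

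The main obstacle is the first step: justifying that the hard-thresholding cut-off is bounded away from zero uniformly in $k$ and $v$. Without this uniformity, a column whose limit sits near the threshold could oscillate between being zeroed out and being kept, and the support could fail to stabilize. Once the uniform bound $\alpha>0$ is in place, however, the rest of the argument is a direct consequence of convergence of the column norms and is essentially immediate.
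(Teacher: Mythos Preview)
Your proposal is correct and follows the same approach as the paper: both exploit the hard-thresholding structure of $\operatorname{prox}_\lambda^{\|\cdot\|^0}$ to obtain the dichotomy that each column of $\mathbf{A}^{(N)^k}$ is either zero or has norm bounded away from zero, and then invoke convergence of the column norms to freeze the support. Your version is in fact more careful than the paper's, since you explicitly establish the uniform lower bound $\alpha=\sqrt{2\lambda/(\varepsilon+L)}$ on the threshold via boundedness of the Lipschitz constants, whereas the paper somewhat loosely takes ``$\varepsilon<\lambda$'' without tying this choice to the step-size-dependent cut-off.
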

\begin{proof}
	For a sequence $ \{(\mathbf{A}^{(1)^k}, \cdots, \mathbf{A}^{(N)^k}) \}_{k\in \mathbb{N}}$ converges to $(\mathbf{A}^{(1)^*}, \cdots, \mathbf{A}^{(N)^*})$, we have $\lim_{k\to  \infty} \|\mathbf{A}^{(N)^k}-\mathbf{A}^{(N)^*}\|=0$.
	Hence, for any $\varepsilon < \lambda$, there exists $M$ such that for all $k>M$, the inequality $\|\mathbf{A}^{(N)^k}-\mathbf{A}^{(N)^*}\|\leq \varepsilon$ holds.
	
	Now we prove that for all $k>M$, the equality ${\rm supp}(\mathbf{A}^{(N)^k})={\rm supp}(\mathbf{A}^{(N)^*})$ holds by contradiction. 
    If ${\rm supp}(\mathbf{A}^{(N)^k}) \neq {\rm supp}(\mathbf{A}^{(N)^*})$, then there exists $j$
	such that $ \|\mathbf{a}^{(N)^k}_j \|>0$, $ \|\mathbf{a}^{(N)^*}_j \|=0$ or $ \|\mathbf{a}^{(N)^k}_j \|=0$, $ \|\mathbf{a}^{(N)^*}_j \|>0$.
    By $ \|\mathbf{A}^{(N)^k}-\mathbf{A}^{(N)^*} \|\leq \varepsilon$, we can deduce that $0< \|\mathbf{a}^{(N)^k}_j \|\leq\varepsilon$ if $ \|\mathbf{a}^{(N)^k}_j \|>0$, $ \|\mathbf{a}^{(N)^*}_j \|=0$, or $0< \|\mathbf{a}^{(N)^*}_j \|\leq \varepsilon$ if $ \|\mathbf{a}^{(N)^k}_j \|=0$, $ \|\mathbf{a}^{(N)^*}_j \|>0$.
    This contradicts the definition of $\operatorname{prox}_{\lambda}^{\|\cdot\|^{0}}(\cdot)$.
    Therefore, we have ${\rm supp}(\mathbf{A}^{(N)^k}) = {\rm supp}(\mathbf{A}^{(N)^*})$.
    This completes the proof.
\end{proof}
	
Based on Theorem \ref{supp}, we develop a rank reduction strategy as follows:
Assume that there exists a constant $M$ such that for all $k>M$, the nonzero columns of $\mathbf{A}^{(N)^k}$ remain unchanged.
Let $\mathbb{S}$ to be the index set of the nonzero columns of $\mathbf{A}^{(N)^k}$.
We remove the zero columns of $\mathbf{A}^{(N)^{M+1}}$ and the corresponding columns of $\mathbf{A}^{(i)^{M+1}}, i=1,\cdots, N-1$, i.e., $\mathbf{A}^{(i)^{M+2}}={\mathbf{A}}^{(i)^{M+1}}(:,\mathbb{S}), i=1,\cdots, N$, and then we set $R=|\mathbb{S}|$ in subsequent iterations. This strategy reduces computational complexity by eliminating unnecessary components while maintaining the accuracy of the CP decomposition.

We incorporate the proposed rank reduction strategy into Algorithm \ref{algorithm APGBCD1} to reduce the computational complexity when solving \eqref{modelF}.
Let $\mathbf{A}:=(\mathbf{A}^{(1)},\cdots,\mathbf{A}^{(N)})$.
The algorithm is summarized in Algorithm \ref{algorithm APGBCDRR}.
\begin{algorithm}[H]
\caption{The double-loop block-coordinate proximal gradient descent algorithm with rank reduction for \eqref{model} }
\label{algorithm APGBCDRR}
	\begin{algorithmic}
		\State {\bf Initialization.}{
  Given the initial point $\mathbf{A}^{0}$. 
Let $\{w^k\}$ be a non-decreasing positive sequence with $\gamma=\sup\{w^k\}<1$. 
Set the integers  $R>0$, $l=0$, $k=0$, $L>0$, $m_i>0, i=1,\cdots,N$, $m=\max\{m_i\}$  and the real parameter $\varepsilon>0$. 
Let $\overline{\mathbf{A}}^{0}=\mathbf{A}^{0}$ and $\tau=\frac{\varepsilon}{m}$.}
    \State {1.} {\bf while} {$l<L$}
    \State {2.} {$\quad$ {\bf for} $i=1:N$}
    \State {3.} $\quad$ $\quad$ Update {\small $\mathbf{A}^{(i)^{k+1}}={\rm sub\_BC\_PGD}(\mathbf{A}^{(i)^{k}},  \widetilde{\mathbf{A}}_{(<i)}^{{k+1}},\overline{\mathbf{A}}_{(>i)}^{{k}},m_i,\varepsilon)$} by Algorithm \ref{algorithm for GR1}.
    \State {4.} {$\quad$ $\quad$ Update
		$\widetilde{\mathbf{A}}^{(i)^{k+1}} =	\mathbf{A}^{(i)^{k+1}}+w_k(\mathbf{A}^{(i)^{k+1}}-\overline{\mathbf{A}}^{(i)^{k}})$.}
    \State {5.} {$\quad$ {\bf end for}}
    \State {6.} {$\quad$  {\bf if} {$F(\mathbf{A}^{k+1})> F(\mathbf{A}^{{k}}) -\frac{\tau}{2}\|\mathbf{A}^{k+1}-\mathbf{A}^{k}\|^2$} }
    \State {7.} {$\quad\quad$ {\bf for} $i=1:N$}
    \State {8.} {$\quad\quad\quad$ Update {\small $\mathbf{A}^{(i)^{k+1}}={\rm sub\_BC\_PGD}(\mathbf{A}^{(i)^{k}},{\mathbf{A}}_{(<i)}^{{k+1}},{\mathbf{A}}_{(>i)}^{{k}},m_i,\varepsilon)$} by Algorithm \ref{algorithm for GR1}.}
    \State {9.} $\quad\quad\quad$ Update $\overline{\mathbf{A}}^{(i)^{k+1}} =	\mathbf{A}^{(i)^{k+1}}+w_k(\mathbf{A}^{(i)^{k+1}}-\overline{\mathbf{A}}^{(i)^{k}})$.
    \State {10.} {$\quad\quad$ {\bf end for}}
    \State {11.} {$\ \  $  {\bf else}}
    \State {12.}{ $\quad \quad$ Update
    $\overline{\mathbf{A}}^{k+1}=\widetilde{\mathbf{A}}^{k+1}$.}
    \State {13.} {$\ \  $  {\bf end if}}
		\State {14.} {$\quad$ {\bf if} ${\rm supp}({\mathbf{A}}^{(N)^{k+1}})={\rm supp}({\mathbf{A}}^{(N)^{k}})$} 
            \State {15.} {$\quad\quad$ Set $l=l+1$.}
            \State {16.} {$\quad$ {\bf else} }
            \State {17.} {$\quad\quad$ Set $l=0$.}
		\State {18.} {$\quad$ {\bf end if}}
            \State {19.} {$\quad$ {$k=k+1$.}}
		\State {20.} {\bf end while}
		\State {21.} {$\mathbb{S}$=${\rm supp}({\mathbf{A}}^{(N)^{k}})$.}
        \State {22.} {$R=|\mathbb{S}|$.}
		\State {23.} {{\bf for} $i=1:N$}
		\State {24.} {$\quad$ ${\mathbf{A}}^{(i)^{k+1}}={\mathbf{A}}^{(i)^{k+1}}(:,\mathbb{S})$, $\overline{\mathbf{A}}^{(i)^{k}}=\overline{\mathbf{A}}^{(i)^{k}}(:,\mathbb{S})$.}
		\State {25.} {{\bf end for}}
            \State {26.} {\bf for $k=k,k+1,\ldots$}
            \State {27.} {$\quad$ Repeat steps $2-13$ to update $\mathbf{A}^{k+1}$.}
            \State {28.} {\bf end for}
            \State {29.} {\bf return} {$\mathbf{A}^*:={\mathbf{A}}^{k+1}$.}
	\end{algorithmic}
\end{algorithm}

\begin{remark}
    After incorporating the rank reduction strategy in steps $21-25$ of Algorithm \ref{algorithm APGBCDRR}, the subproblem of $\mathbf{A}^{(N)}$ is  transformed into the following unconstrained problem:
\begin{equation*}
		\min_{\mathbf{A}^{(N)}} \frac{1}{2} \|\mathbf{X}_{(N)}-\mathbf{A}^{(N)}\mathbf{D}^{(N)^\top} \|^2,
\end{equation*}
where $\mathbf{D}^{(N)}=(\mathbf{d}_1,\cdots,\mathbf{d}_R)=\mathbf{A}^{(N-1)}\odot\cdots\odot\mathbf{A}^{(1)}$, $\mathbf{X}_{(N)}$ represents the mode-$N$ unfolding of the given tensor $\mathcal{X}$.
\end{remark}
\begin{remark}
     \par (i) We demonstrate that the number of nonzero columns of $\mathbf{A}^{(N)^k}$ remains unchanged after a certain number of iterations through numerical experiments, as shown in Figure \ref{fig:canshu1}.
    \par (ii) The comparison of computation times between Algorithm \ref{algorithm APGBCD1} and Algorithm \ref{algorithm APGBCDRR} in Subsection \ref{chemical data} demonstrates the efficiency of the rank reduction strategy.
\end{remark}
% }

\section{Numerical experiments} \label{sec5}
\indent
\par
In this section, we apply the proposed model and algorithms to the component separation problem in chemometrics. For comparison, we use three other methods in the experiments: AIBCD \cite{WangCOAP},  CP\_ALS \cite{Kolda}, and ATLD \cite{ATLD}.
AIBCD is an accelerated inexact block-coordinate descent algorithm for the nonnegative CP decomposition model with unit length constraints. 
CP\_ALS, also known as the parallel factor analysis (PARAFAC) in chemometrics, is a classical alternating least square CP algorithm for CP decomposition. 
ATLD is an alternating trilinear decomposition method, widely used in chemometrics.
We denote the proposed double-loop block-coordinate proximal gradient descent algorithm with extrapolation for \eqref{modelF} as eDLBCPGD and the algorithm with the rank reduction strategy for \eqref{modelF} as eDLBCPGD\_RR.

All parameters for each method are optimized to ensure fair comparisons and peak performance.
The factor matrices for all algorithms are initialized using random numbers generated by the command $randn(I_n, R)$. 
All experiments are performed on an Intel i9-12300U CPU desktop computer with 16GB of RAM and MATLAB R2023b.
Each algorithm is executed 30 times to evaluate average performance.

For a given tensor $\mathcal{X}\in \mathds{R}^{n_1\times n_2\times n_3}$ and the sequence $\{\mathbf{A}^{(1)^k}, \mathbf{A}^{(2)^k}, \mathbf{A}^{(3)^k}\}$ generated by these algorithms, the relative error at the $k$-th outer-iteration is defined as follows: 
$$
\operatorname{RelErr}_k:=\frac{ \|\mathcal{X}- [\![\mathbf{A}^{(1)^k},{\mathbf{A}^{(2)^k}},\mathbf{A}^{(3)^k}]\!]  \|}{\|\mathcal{X}\|}.
$$
The stopping  criteria are based on the change in relative error:
\begin{equation*} 
 |\operatorname{RelErr}_{k-1}-\operatorname{RelErr}_k |<1 \times 10^{-6}.
\end{equation*}

To evaluate the performance of different methods, the root mean squared error of prediction (RMSEP) \cite{rmsep} is used to measure the results of component separation.
The true concentration profiles matrix is denoted as $\mathbf{A}^{(3)}_{{real}}\in\mathds{R}^{n_3\times R}$ and the computed concentration profiles matrix after regression is represented by $\widetilde{\mathbf{A}}^{(3)}$. The definition of RMSEP  is as follows:
$$
\begin{array}{ll}
RMSEP= \sqrt{\frac{\sum\limits_{i=1}^{n_3}\sum\limits_{j=1}^R(\mathbf{a}^{(3)}_{{\text{real}_{i,j}}}-\tilde{\mathbf{a}}^{(3)}_{_{i,j}})^2}{n_3R}} .
\end{array}$$
\begin{remark}
    If the components cannot be separated correctly, we denote the value of RMSEP  as "-".
  \end{remark}

\subsection{Parameters setting}
\indent
\par
In this subsection, we discuss the parameters of the proposed model and algorithms.

We choose $\lambda$, the parameter of the group sparsity term in the model \eqref{model}, as a decreasing sequence: $\lambda=\max\{1\times 10^{-4}, \kappa \lambda_{max}\}$, where $\kappa=0.97$ and $\lambda_{max}=1000$. 

For the extrapolation parameter $w_k$ in the Algorithm \ref{algorithm APGBCD1} and Algorithm \ref{algorithm APGBCDRR}, we update it as follows: $w_k=\min\{\frac{t_{k-1}-1}{t_k},\gamma\}$, where $t_0=t_{-1}=1$, $t_{k+1}=\frac{1}{2}(1+\sqrt{1+4t_k^2})$, and $\gamma=0.9$. 
Additionally, we choose parameters $\varepsilon=1\times 10^{-5}$ in Algorithm \ref{algorithm for GR1} and $L=20$ in Algorithm \ref{algorithm APGBCDRR}.

For the number of inner-iterations  $m_i, i=1,2,3$ in Algorithm \ref{algorithm for GR1}, we set a uniform value, denoted as $m$, for the subproblem of each factor matrix. 
To investigate the impact of this parameter on computational efficiency, we utilize the two-component data introduced in Subsection \ref{chemical data}.
The range for $m$ is from $1$ to $50$, and the initial estimated number of components is fixed at $5$, i.e., $R=5$.  
We choose the same starting point, and the computation time and number of outer-iterations are reported in Figure \ref{mm}. 
Observations indicate that the computation time decreases significantly when 
$m$ falls within the range of  $5$ to $10$.
Additionally, we observe that the number of outer-iterations decreases as the number of inner-iterations increases and eventually stabilizes. 
Based on these findings, we fix $m=7$ for all subsequent experiments to balance simplicity and efficiency.
 \begin{figure}[htpb]
   \centering
   \subfloat[]{\includegraphics[width=5cm,height=4.2cm]{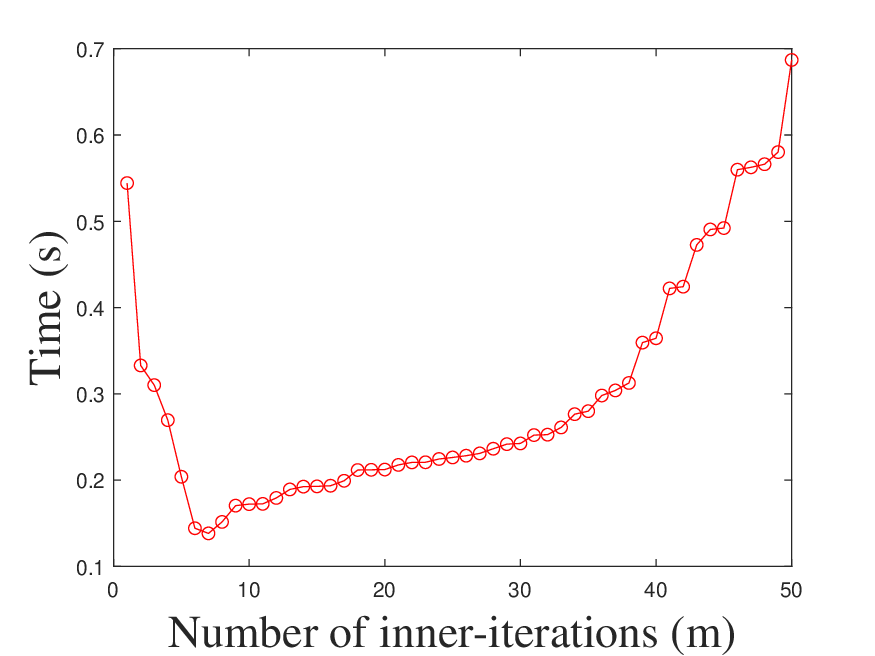}}\hspace{0.2cm}
   \subfloat[]{\includegraphics[width=5cm,height=4.2cm]{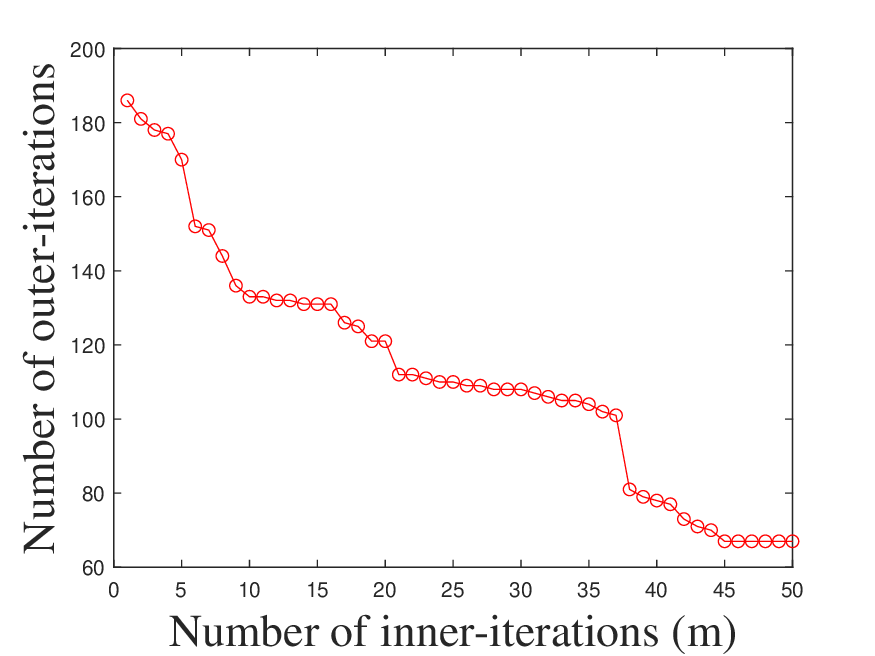}}\hspace{0.2cm}
   \caption{The computation time and number of outer-iterations of Algorithm \ref{algorithm APGBCDRR} for the two-component data with different inner-iterations ($m$).}
   \label{mm}
 \end{figure}

\subsection{Experiments on real data}\label{chemical data}
\indent
\par
In this subsection, we utilize two groups of real chemical data, including two-component data ($32 \times 39 \times 15$) in \cite{Wu} and macrocephalae rhizoma data ($52 \times 214\times \ 16$)\footnote{\url{https://github.com/V-Geler/Conv2dPA/tree/main/Simulator}}, to evaluate the performance of various algorithms.

\subsubsection{Two-component data}
\indent
\par
This data contains two components and exhibits background drift, so it can be treated as containing three components.
We select the initial number of components as $R=  \{3,5,7\}$. 
The RMSEP values and computation time are shown in Table \ref{tab:l2}. 
Figure \ref{fig:two} illustrates the real concentration profiles and the calculated relative concentration profiles resolved by different methods with $R=5$, corresponding to  $\mathbf{A}^{(3)}$. 
Additionally, the change in the number of nonzero columns of $\mathbf{A}^{(3)}$ over the out-iterations is presented in Figure \ref{fig:canshu1}.

\begin{table}[]
 \footnotesize
	\caption{\label{tab:l2} Comparison of computation time (s) and root mean square error of prediction (RMSEP) of different methods for two-component data with $R=3,5,7$. The best results are highlighted
in bold. ``-'' represents that the components are not separated correctly.}
    \begin{center}
	\begin{tabular}{|l|l|l|l|l|l|l|}
	\hline
 \multirow{1}{*}{} & \multirow{1}{*}{Algorithm}     & {AIBCD}    &{CP\_ALS}  & ATLD  & eDLBCPGD   & eDLBCPGD\_RR    \\ \hline
	\multirow{2}{*}{$R=3$}     & Time                & \multicolumn{1}{l|}{{0.032 }}   &
 0.046 & {\bf 0.021} & 0.225 &0.085\\ \cline{2-7} 
 & RMSEP                      &  {\bf 3.607}    &{3.651}   &6.154 &3.935   &3.616 \\ \hline
 
 \multirow{2}{*}{$R=5$}     & Time                & 0.073   &
0.081 &  {\bf 0.063} & 0.239 & 0.141\\ \cline{2-7}  
 & RMSEP                      &  -  &-   &5.913  &   3.697 &{\bf 3.681 } \\ \hline

  \multirow{2}{*}{$R=7$}     & Time                & 0.089  &
0.096 &  {\bf 0.095} & 0.361 &  0.127\\ \cline{2-7}  
 & RMSEP                      &  -  &-   &6.326 & 3.692     &{\bf 3.678}\\ \hline
	\end{tabular}      
\end{center}
	\end{table}

\begin{figure}[htpb]
  \centering
  \subfloat[Real]{\includegraphics[width=4.2cm,height=3.6cm]{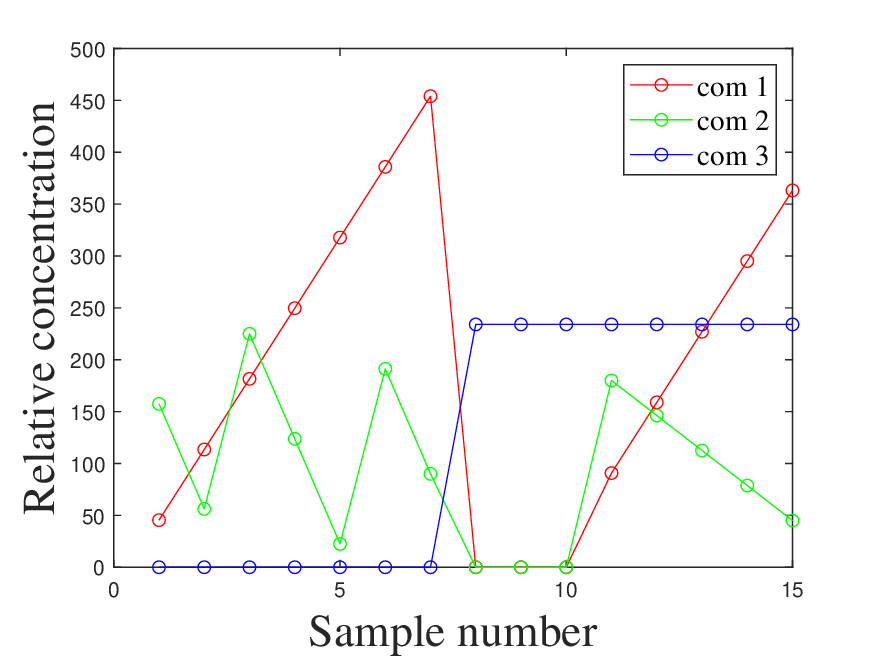}}\hspace{0cm}
  \subfloat[AIBCD]{\includegraphics[width=4.2cm,height=3.6cm]{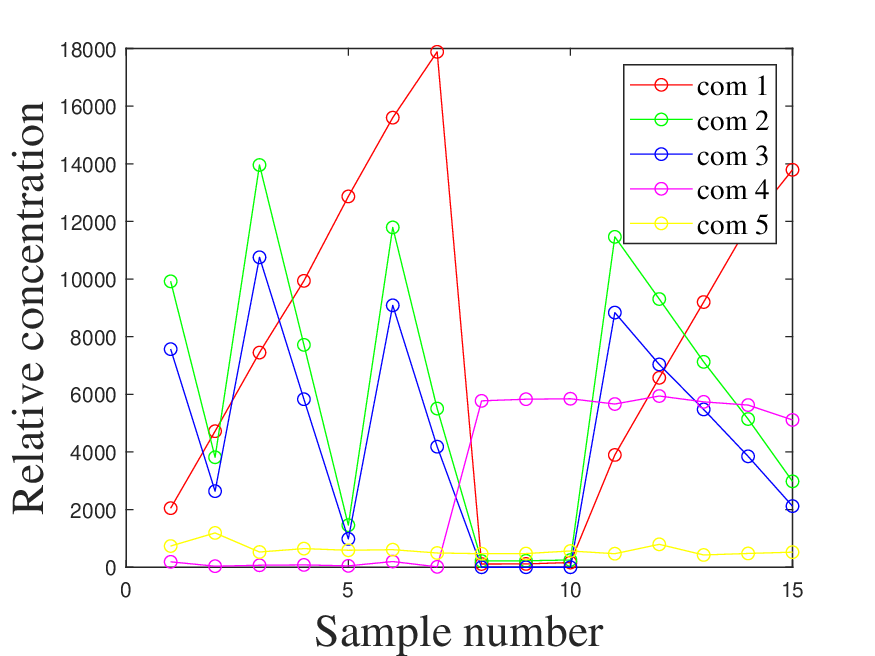}}\hspace{ 0cm}
  \subfloat[CP\_ALS]{\includegraphics[width=4.2cm,height=3.6cm]{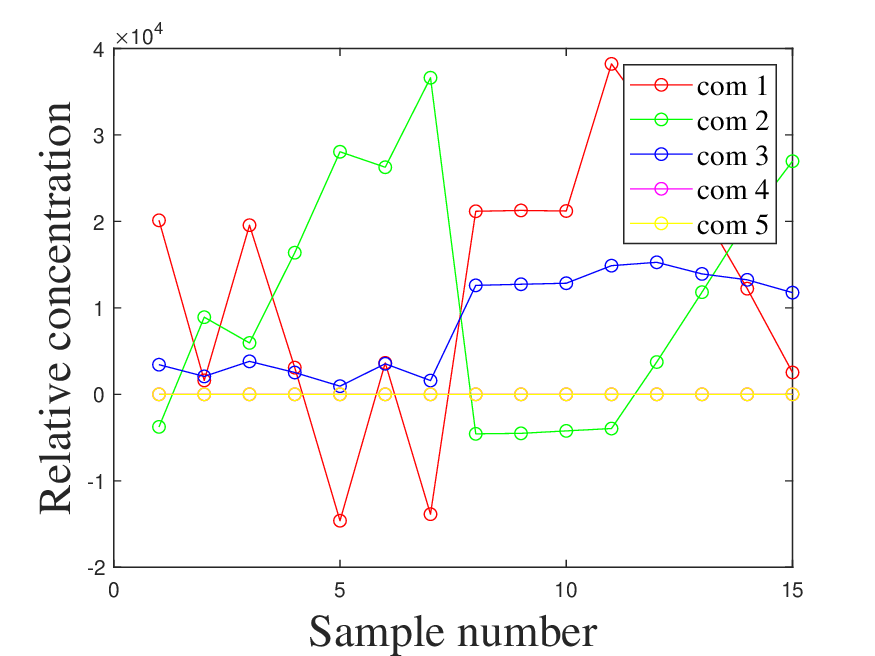}}\hspace{  0cm}
  
  \subfloat[ATLD]{\includegraphics[width=4.2cm,height=3.6cm]{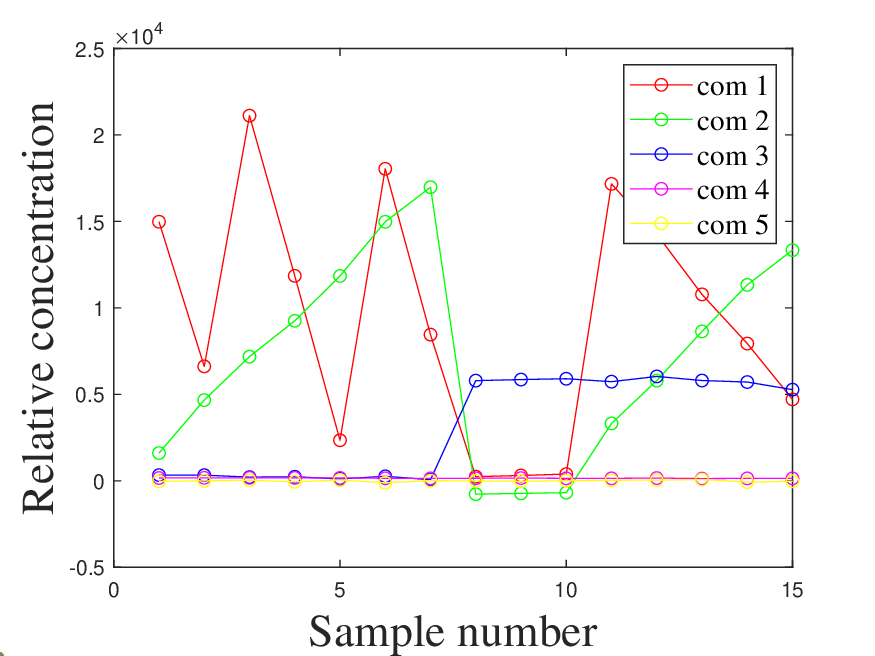}}\hspace{  0.01cm}
  \subfloat[eDLBCPGD]{\includegraphics[width=4.2cm,height=3.6cm]{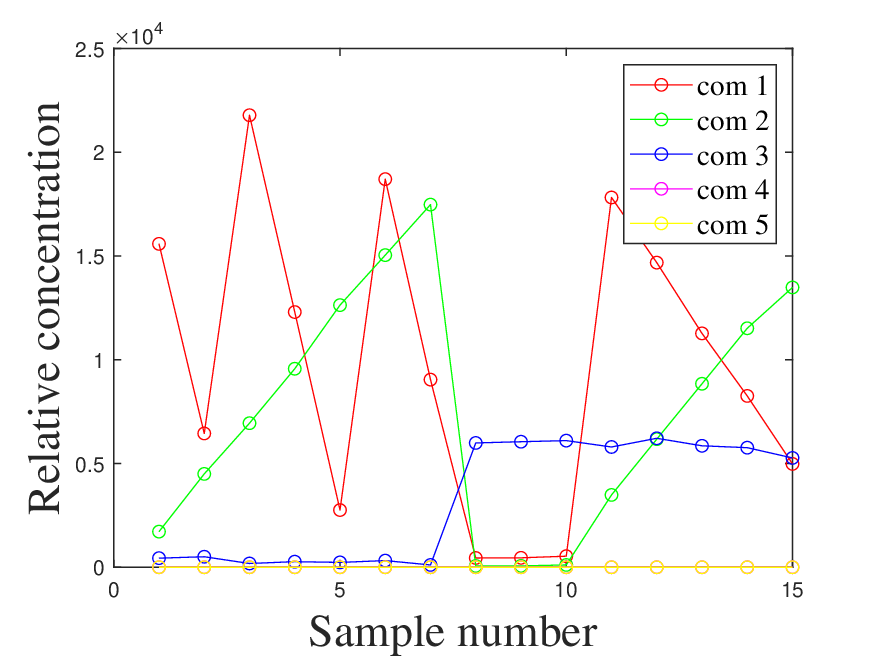}}\hspace{  0.01cm}
  \subfloat[eDLBCPGD\_RR]{\includegraphics[width=4.2cm,height=3.6cm]{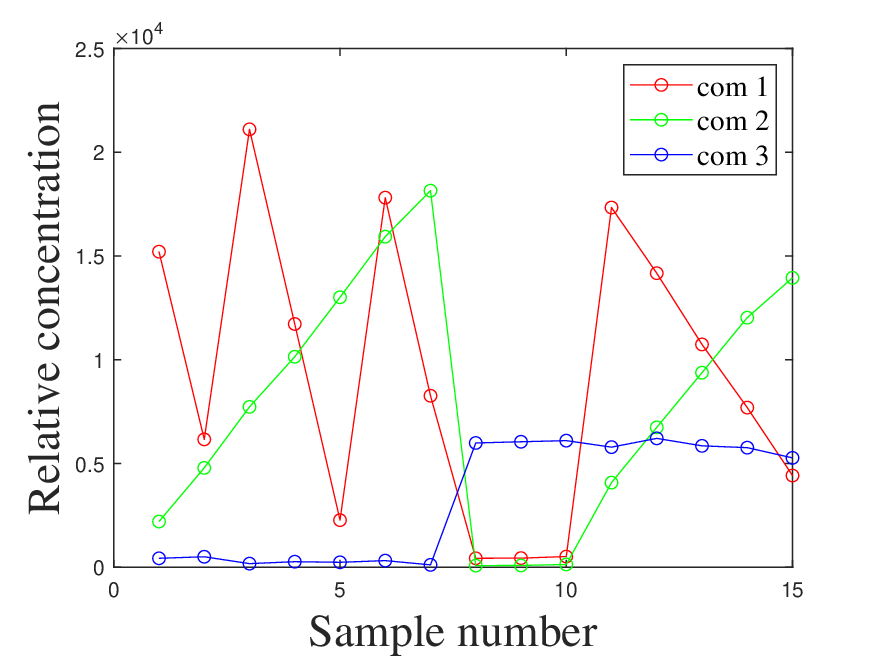}}\hspace{  0cm}

  \caption{Analytical results with $R=5$ for the two-component data.  Real concentration profiles (a) and  relative concentration profiles resolved by (b) AIBCD,
   (c) CP\_ALS, (d) ATLD,  (e) eDLBCPGD, and (f) eDLBCPGD\_RR, respectively.}\label{fig:two}
\end{figure}

\begin{figure}[htpb]
	\centering
	\subfloat[$R=5$]{\includegraphics[width=5cm,height=4.2cm]{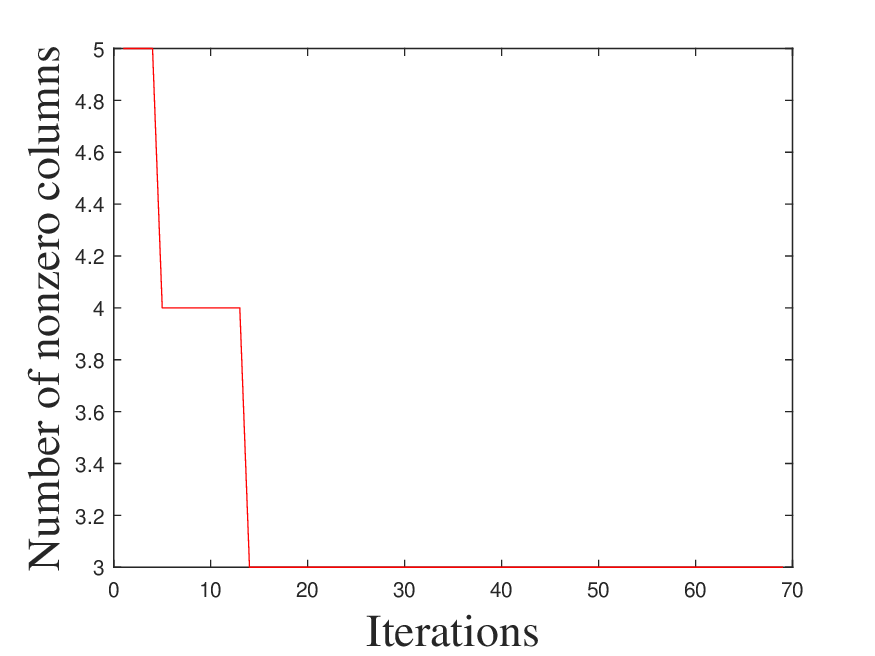}}\hspace{0.02cm}
	\subfloat[$R=7$]{\includegraphics[width=5cm,height=4.2cm]{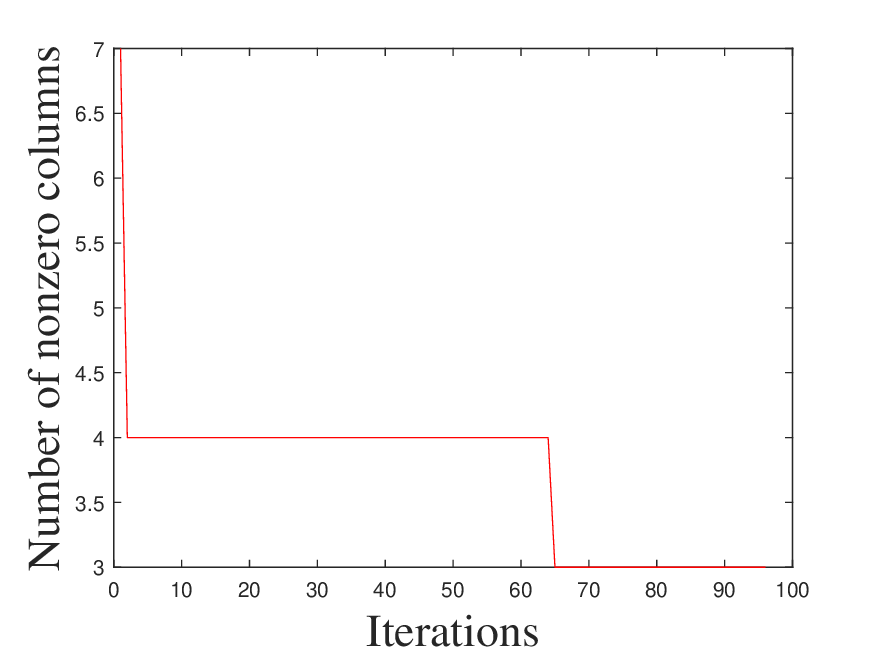}}\hspace{0.02cm}
	\caption{ The number of nonzero columns of the factor matrix $\mathbf{A}^{(3)}$ concerning the iteration numbers.}
	\label{fig:canshu1}
\end{figure}	

Table \ref{tab:l2} shows that AIBCD and CP\_ALS fail to achieve accurate separation when the number of components is overestimated, whereas our proposed methods still exhibit satisfactory results.
Although ATLD provides the fastest computational time and obtains correct separation, it yields unsatisfactory RMSEP values and lacks robust theoretical guarantees. 
Further comparison of eDLBCPGD\_RR with eDLBCPGD shows that eDLBCPGD\_RR requires less computation time, highlighting the effectiveness of the rank reduction strategy. 
Figure \ref{fig:two} shows that when  $R=5$, only our proposed methods consistently separate exactly three components, underscoring the advantage of incorporating the group sparsity term. 
Moreover, Figure \ref{fig:canshu1} reports that the number of nonzero columns of $\mathbf{A}^{(3)}$ decreases and eventually stabilizes over the out-iterations. 
This observation further supports the viability of the rank reduction strategy.	

%%%%%%%%%%%%%%%%%%%-----------------------------------------------------------------------------------------------------------------
\subsubsection{ Macrocephalae rhizoma data}
\indent
\par
The macrocephalae rhizoma data contains three components, and we select the initial number of components as $R=  \{3,5,7 \} $.
The  RMSEP values and computation time are reported in Table \ref{tab:l3}. 
Figure \ref{fig:Baizhu} displays the real concentration profiles and the relative profiles resolved by different methods with $R=7$, corresponding to $\mathbf{A}^{(3)}$. 
Figure \ref{fig:BaizhuA} presents the real chromatographic profiles and the normalized chromatographic profiles resolved by different methods with $R=7$, corresponding to $\mathbf{A}^{(1)}$.
Figure \ref{fig:Baizhub} shows the real spectra profiles and the normalized spectra profiles resolved by different methods with $R=7$, corresponding to  $\mathbf{A}^{(2)}$. 
In Figure \ref{fig:BaizhuA} and Figure \ref{fig:Baizhub}, we only show the chromatographic and spectra profiles of the nonzero components resolved by eDLBCPGD.

\begin{table}[]
			\footnotesize
		\caption{\label{tab:l3}  Comparison of computation time (s) and root mean square error of prediction (RMSEP) of different methods for the Macrocephalae rhizoma data with $R=3,5,7$. The best results are highlighted
in bold. ``-'' represents that the components are not separated correctly.}
    \begin{center}
	\begin{tabular}{|l|l|l|l|l|l|l|}
	\hline
 \multirow{1}{*}{} & \multirow{1}{*}{Algorithm}     & {AIBCD}    &{CP\_ALS}  & ATLD  & eDLBCPGD   & eDLBCPGD\_RR    \\ \hline
	\multirow{2}{*}{$R=3$}     & Time                & \multicolumn{1}{l|}{{\bf 0.022 }}   &
 0.023 &  0.113 & 0.075 &0.041\\ \cline{2-7} 
 & RMSEP                      &  {1.910e-4 }     &{6.320e-4}   &{1.950e-4} &{\bf 1.610e-4 }   &1.730e-4  \\ \hline
 
 \multirow{2}{*}{$R=7$}     & Time                & 0.037   &
 0.043&  0.092 & 0.112 &{\bf 0.053}\\ \cline{2-7}  
 & RMSEP                      &  -  &-   &- & {1.590e-4  }      &{\bf 1.320e-4} \\ \hline

  \multirow{2}{*}{$R=5$}     & Time                & 0.067   &
0.113 &  0.163 & 0.141 &{\bf 0.093}\\ \cline{2-7}  
 & RMSEP                      &  -  &-   &- & 2.010e-4    &{\bf 1.060e-4}\\ \hline
	\end{tabular}      
\end{center}
	\end{table}

\begin{figure}[htpb]
	\centering
	\subfloat[Real]{\includegraphics[width=4.2cm,height=3.6cm]{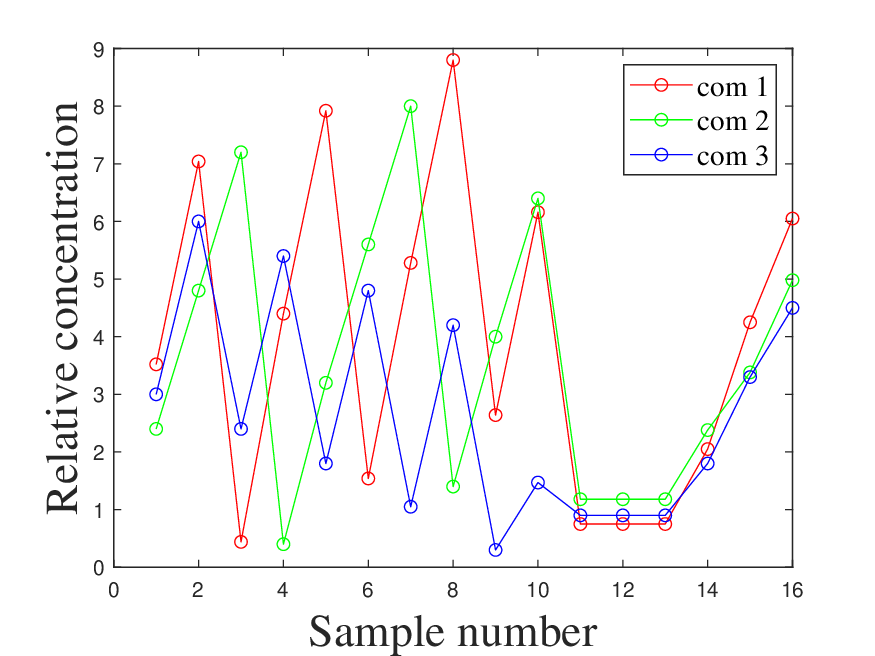}}\hspace{0cm}
	\subfloat[AIBCD]{\includegraphics[width=4.2cm,height=3.6cm]{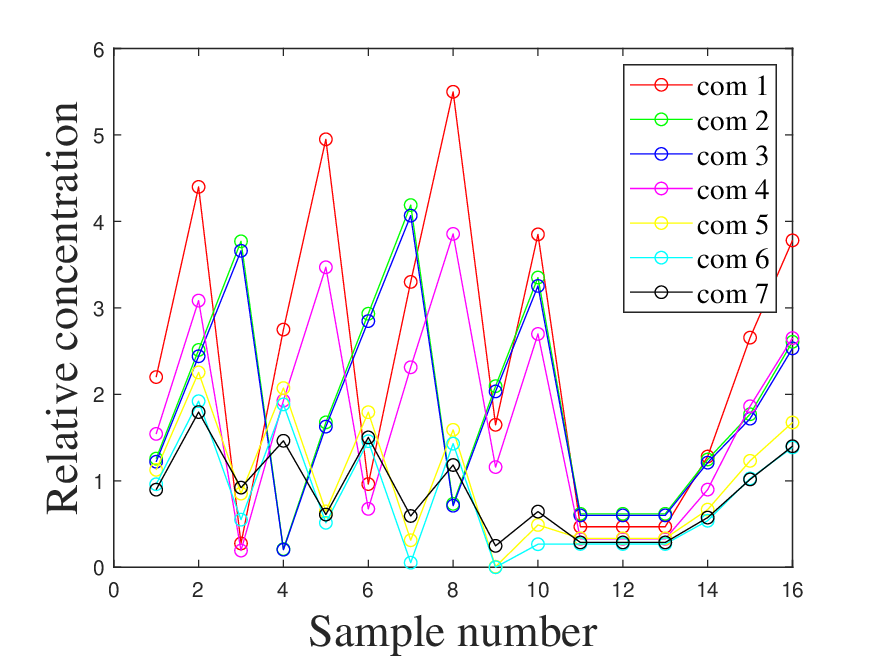}}\hspace{0cm}
	\subfloat[CP\_ALS]{\includegraphics[width=4.2cm,height=3.6cm]{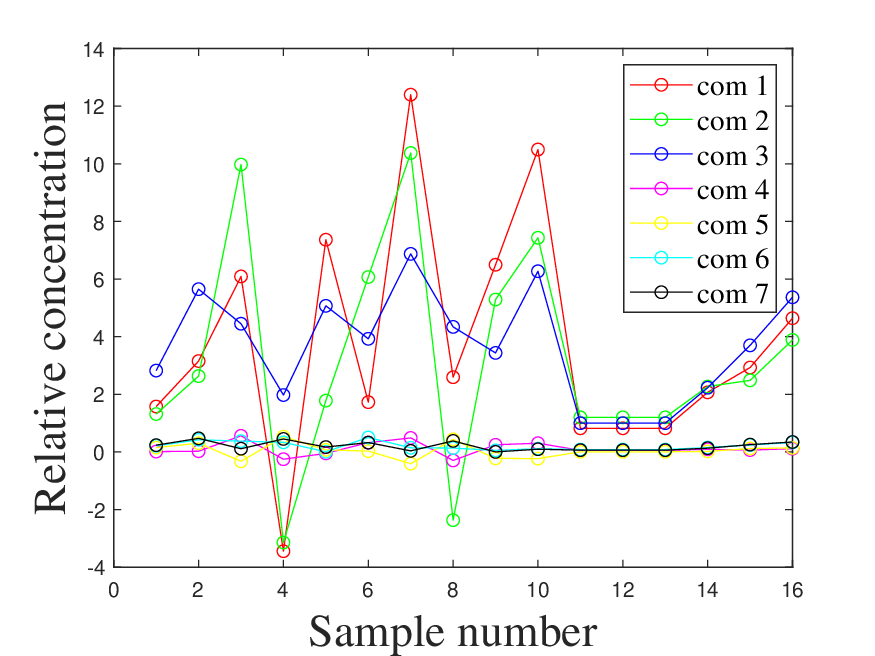}}\hspace{0cm}
    
	\subfloat[ATLD]{\includegraphics[width=4.2cm,height=3.6cm]{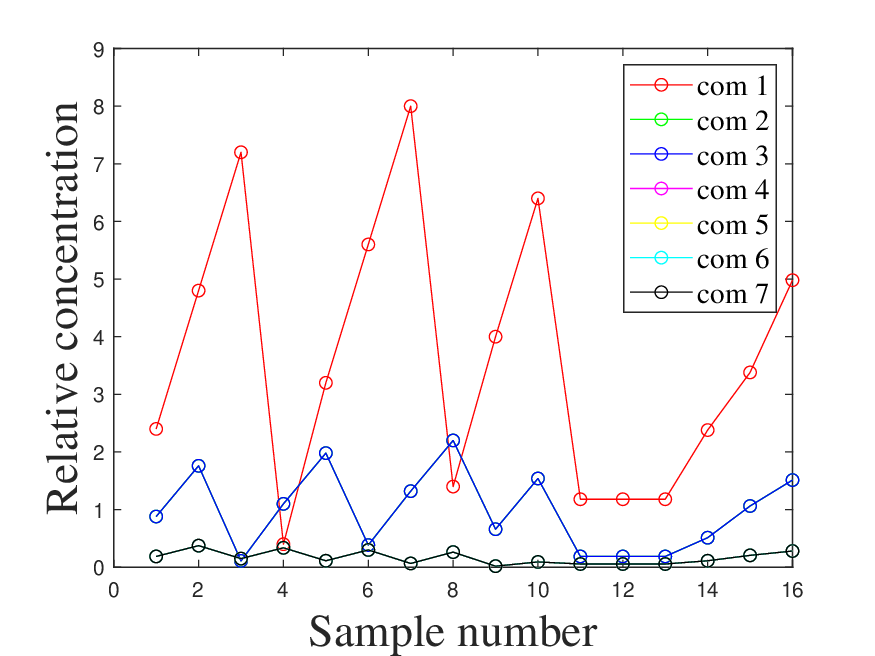}}\hspace{  0cm}
	\subfloat[eDLBCPGD]{\includegraphics[width=4.2cm,height=3.6cm]{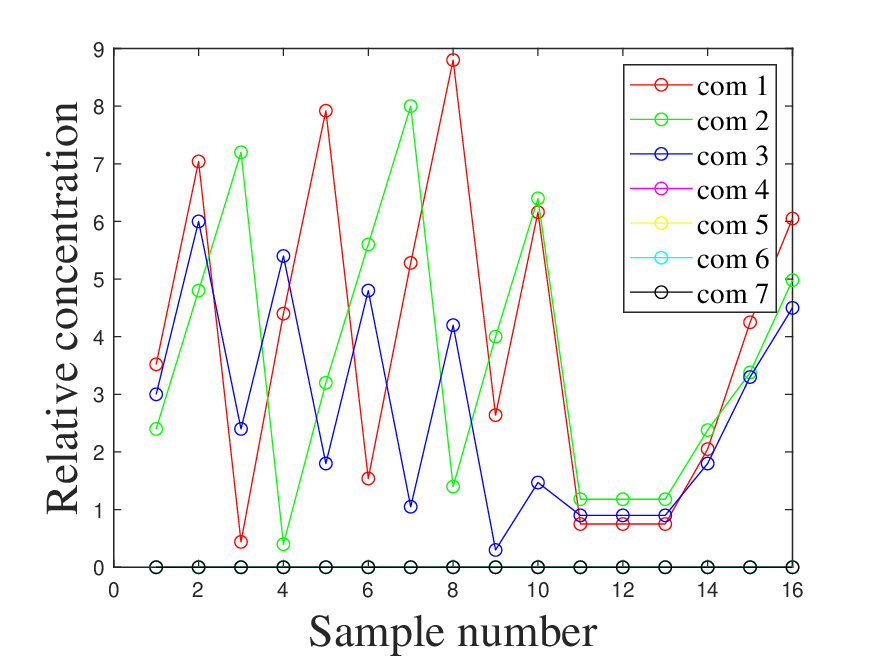}}\hspace{  0cm}
	\subfloat[eDLBCPGD\_RR]{\includegraphics[width=4.2cm,height=3.6cm]{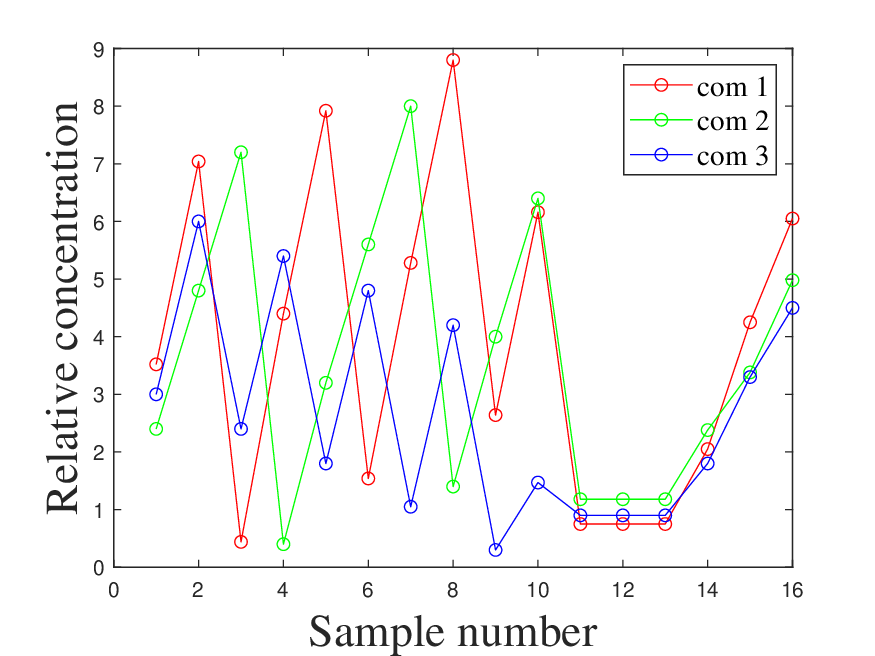}}\hspace{ 0cm}
  
	\caption{Analytical results with $R=7$ for the macrocephalae rhizoma data.  Real concentration profiles (a) and  relative concentration profiles 
	resolved by (b) AIBCD,
	(c) CP\_ALS, (d) ATLD,  (e) eDLBCPGD, and (f) eDLBCPGD\_RR, respectively.
	}\label{fig:Baizhu}
  \end{figure}

 \begin{figure}[ht]
	\centering
	\subfloat[Real]{\includegraphics[width=4.2cm,height=3.6cm]{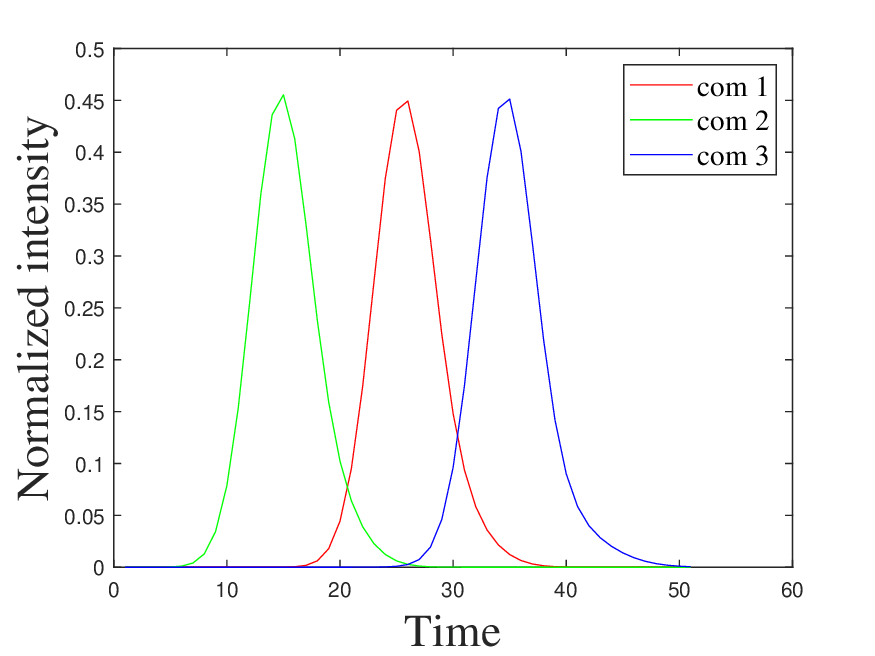}}\hspace{  0cm}
	\subfloat[AIBCD]{\includegraphics[width=4.2cm,height=3.6cm]{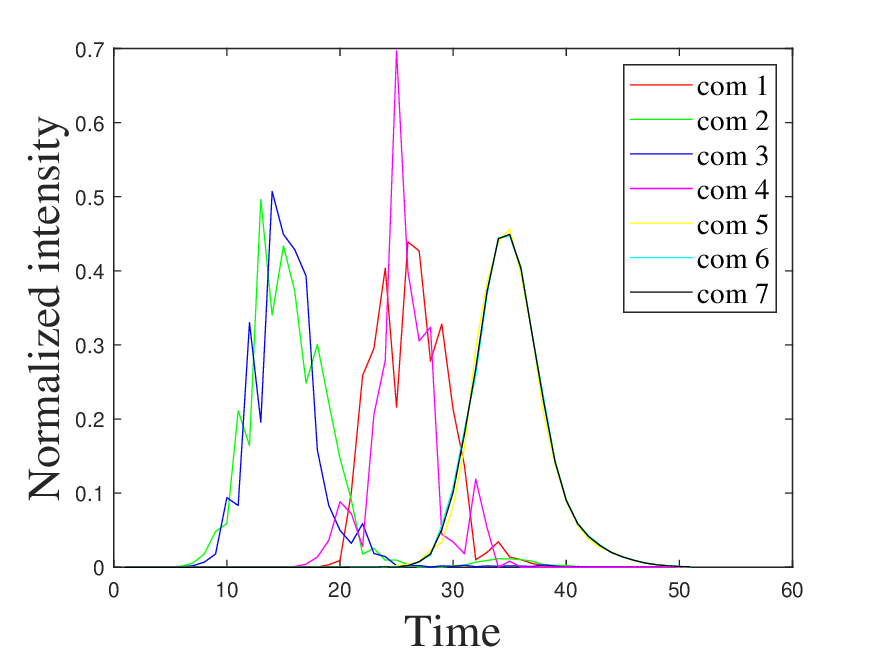}}\hspace{  0cm}
	\subfloat[CP\_ALS]{\includegraphics[width=4.2cm,height=3.6cm]{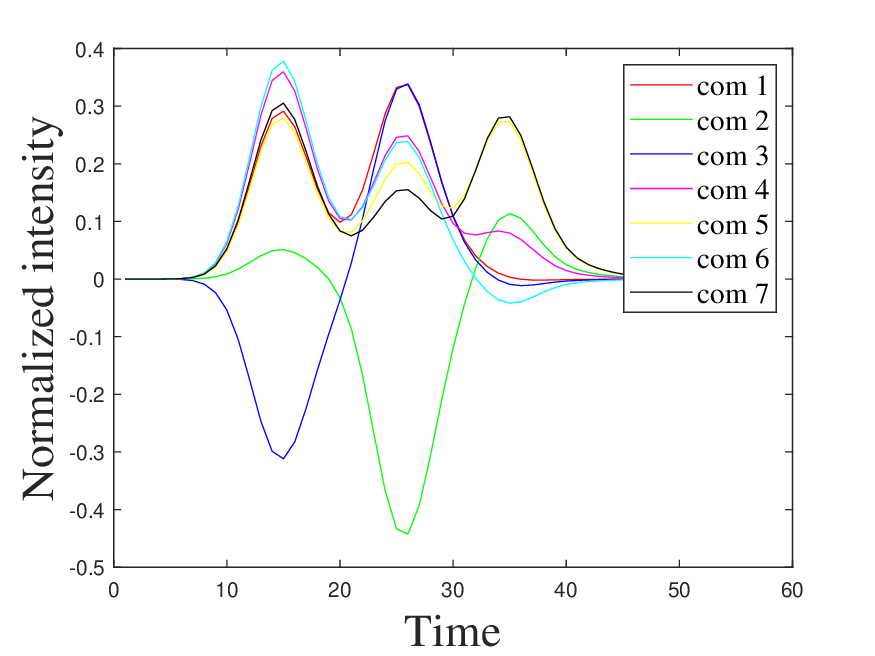}}\hspace{  0cm}
    
	\subfloat[ATLD]{\includegraphics[width=4.2cm,height=3.6cm]{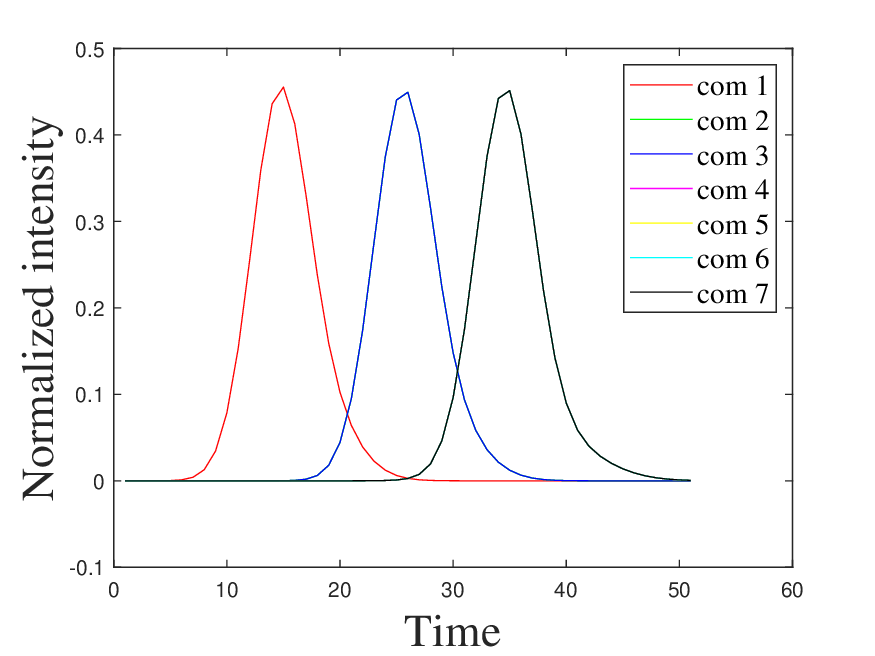}}\hspace{  0cm}
	\subfloat[eDLBCPGD]{\includegraphics[width=4.2cm,height=3.6cm]{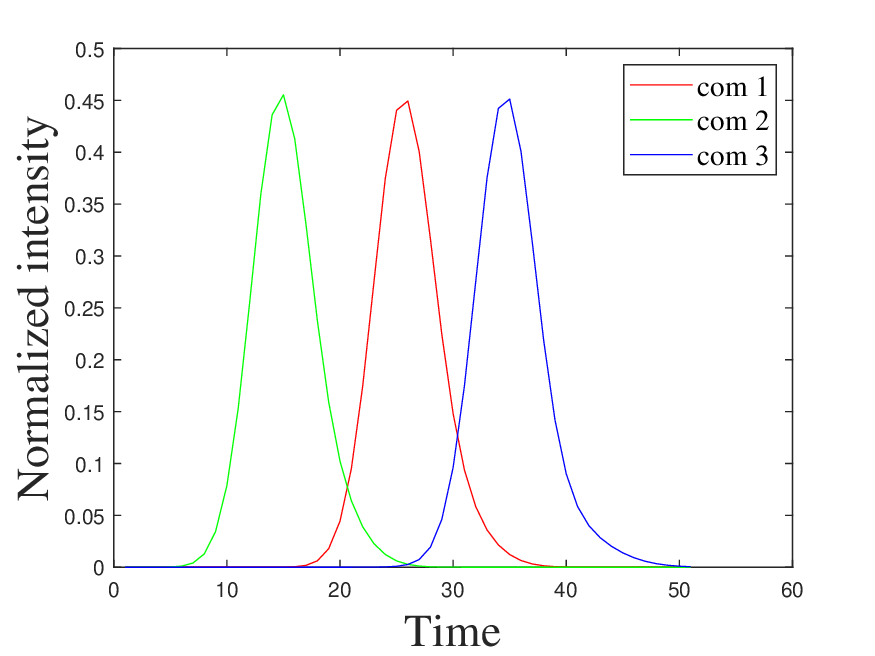}}\hspace{  0cm}
	\subfloat[eDLBCPGD\_RR]{\includegraphics[width=4.2cm,height=3.6cm]{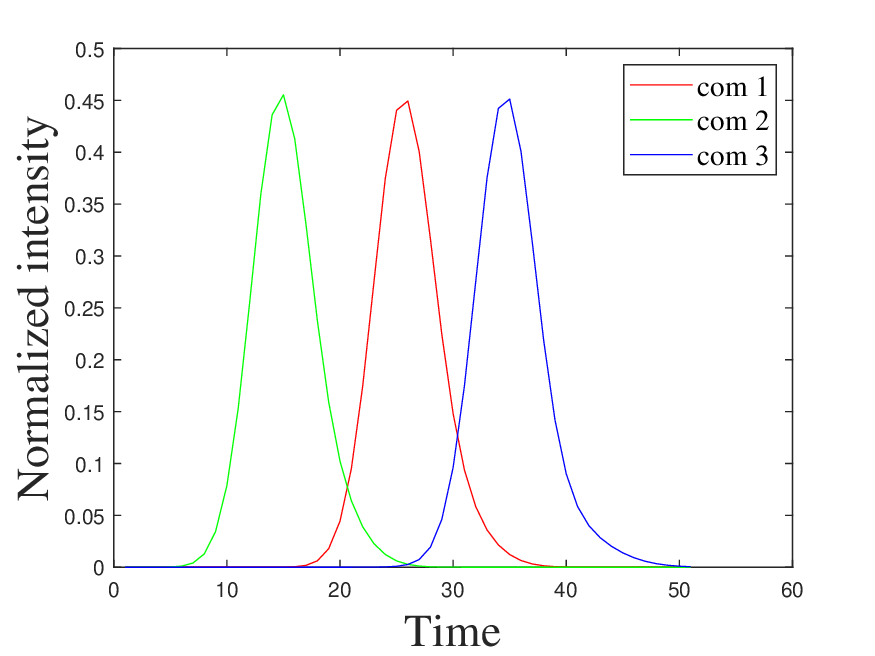}}\hspace{ 0cm}
 
	\caption{Analytical results with $R=7$ for the macrocephalae rhizoma data.  
	Real chromatographic profiles (a) and  normalized chromatographic profiles
	resolved by (b) AIBCD,
	(c) CP\_ALS, (d) ATLD,  (e) eDLBCPGD, and (f) eDLBCPGD\_RR, respectively.}\label{fig:BaizhuA}
 \end{figure}

\begin{figure}[ht]
	\centering
	\subfloat[Real]{\includegraphics[width=4.2cm,height=3.6cm]{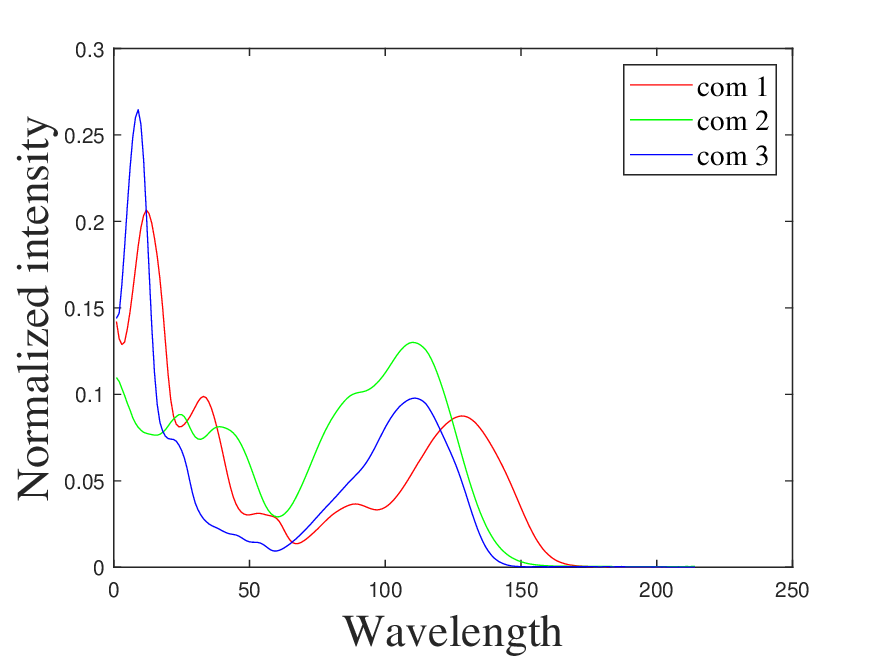}}\hspace{  0cm}
	\subfloat[AIBCD]{\includegraphics[width=4.2cm,height=3.6cm]{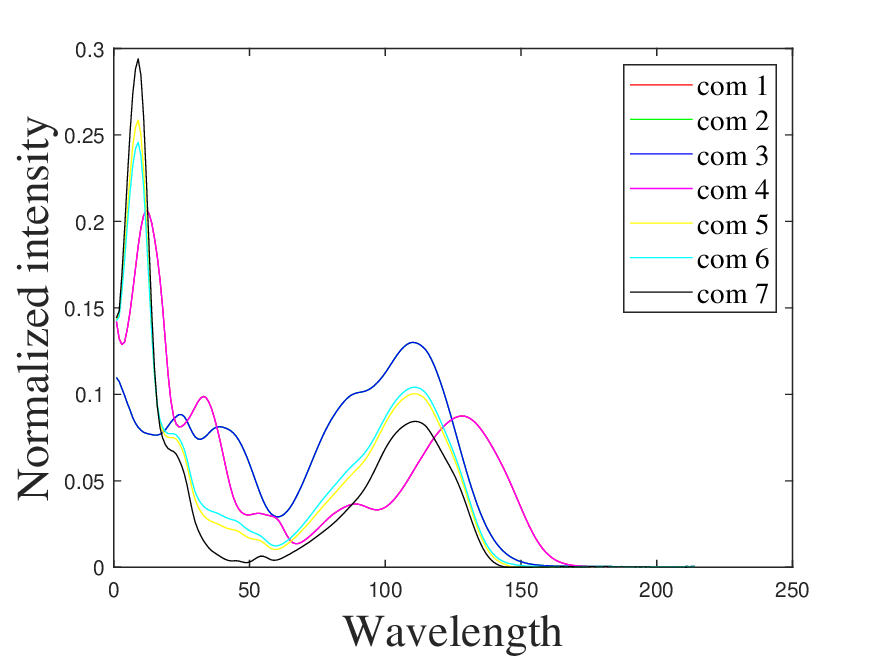}}\hspace{  0cm}
	\subfloat[CP\_ALS]{\includegraphics[width=4.2cm,height=3.6cm]{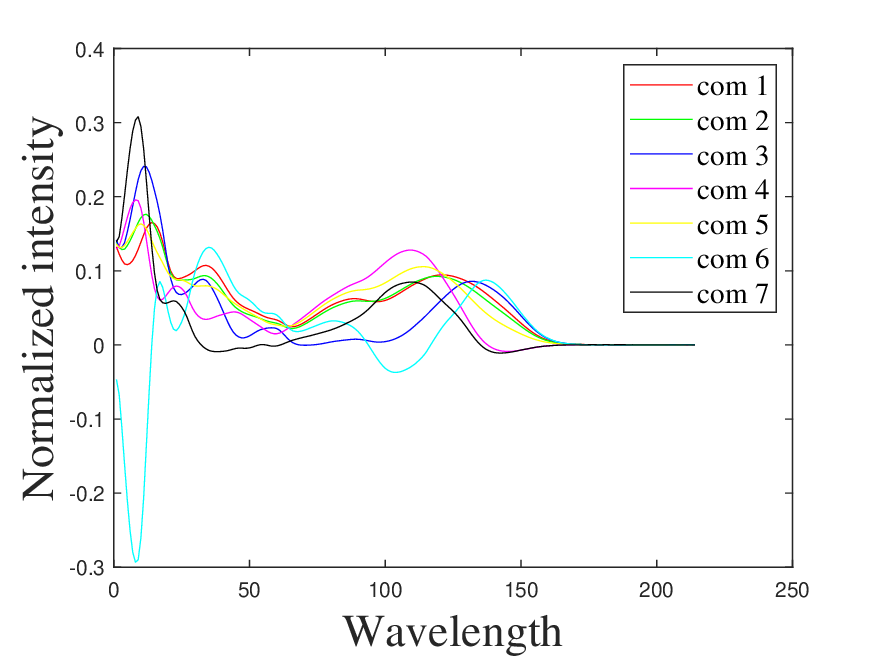}}\hspace{  0cm}
    
	\subfloat[ATLD]{\includegraphics[width=4.2cm,height=3.6cm]{FIGC1/BN7.eps}}\hspace{  0cm}
	\subfloat[eDLBCPGD]{\includegraphics[width=4.2cm,height=3.6cm]{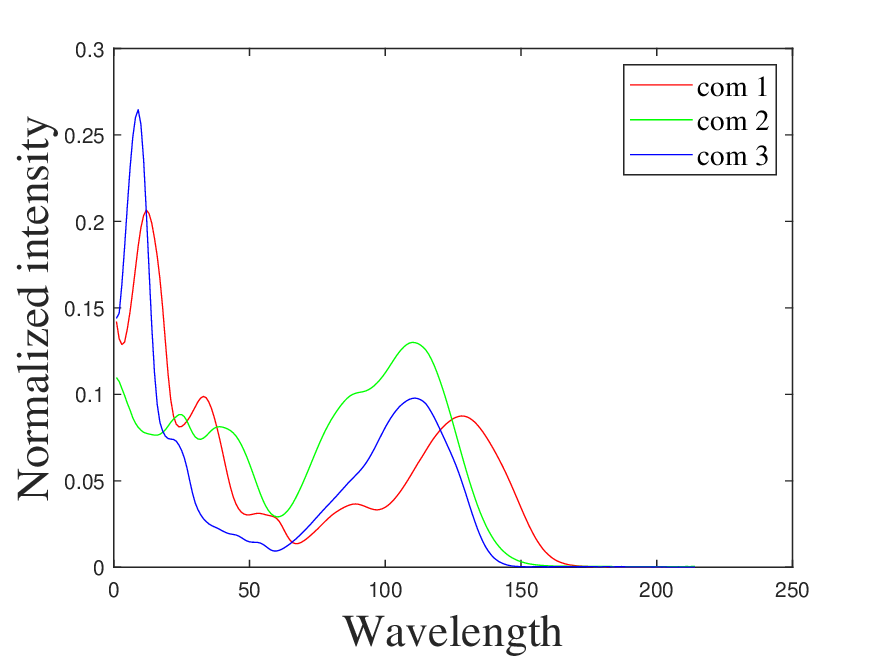}}\hspace{  0cm}
	\subfloat[eDLBCPGD\_RR]{\includegraphics[width=4.2cm,height=3.6cm]{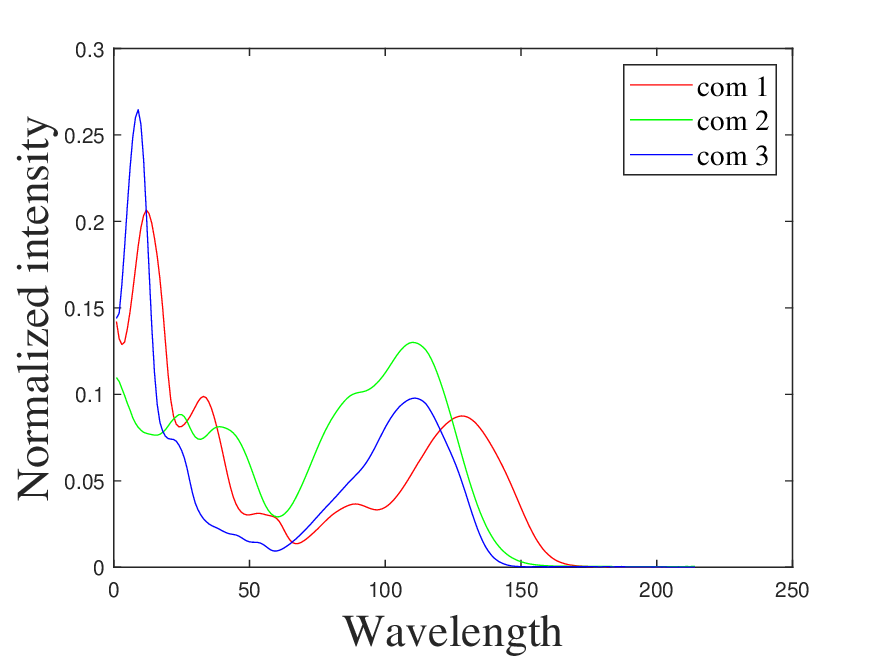}}\hspace{ 0cm}
 
	\caption{Analytical results with $R=7$ for the macrocephalae rhizoma data.  
	Real spectra profiles (a) and  normalized spectra profiles
	resolved by (b) AIBCD,
	(c) CP\_ALS, (d) ATLD,  (e) eDLBCPGD, and (f) eDLBCPGD\_RR, respectively.
		}\label{fig:Baizhub}
 \end{figure}

As shown in Table \ref{tab:l3}, only our proposed methods can achieve satisfactory RMSEP  when the initial number of components is set to $5$ and $7$. 
Furthermore, the performance of our proposed methods is comparable to other methods when the initial number of components is accurate. 
These findings highlight the effectiveness and robustness of our proposed methods. 
% ATLD exhibits slower computation times for this data due to its calculations involving the Moore-Penrose pseudoinverse of a matrix. 
The eDLBCPGD\_RR algorithm demonstrates reduced computational time compared to eDLBCPGD, validating the efficiency of the rank reduction strategy.
Figure \ref{fig:Baizhu}, Figure \ref{fig:BaizhuA}, and Figure \ref{fig:Baizhub} illustrate that our methods can accurately recover accurate concentration profiles, chromatographic profiles, and spectra profiles with $R=7$, whereas other methods fail to do so. 
In particular, although ATLD can obtain the spectra of the components, it divides the concentration of a component into equal parts.
 
	\section{Conclusions} \label{sec6}
    \indent
\par
	Most existing CP decomposition-based models and algorithms typically require the accurate estimation of the CP rank, which is a challenging task.
    To address this challenge, in this paper, we establish the equivalence between the CP rank and the minimization of the group sparsity on any of the factor matrices under the unit length constraints on the columns of the others.
    Based on this theoretical foundation, we develop a regularized CP decomposition model to obtain a rank CP decomposition of a given tensor.
    To solve this model, we introduce a double-loop block-coordinate proximal gradient descent algorithm with extrapolation and provide the convergence analysis. 
    Furthermore, based on the proof that the nonzero columns of $\mathbf{A}^{(N)}$ remain unchanged after a certain number of iterations, we develop a rank reduction strategy to reduce the computational complexity. 
    Finally, we apply the proposed model and algorithms to the component separation problem in chemometrics.
    Numerical experiments conducted on real data demonstrate that our proposed methods can find the optimal solutions even when the initial number of components is overestimated, highlighting the robustness and effectiveness of our approach.

  \section*{Acknowledgments}
   The authors would like to express their gratitude to Professor Hailong Wu, Tong
	Wang,  and Anqi Chen for generously sharing the code of ATLD and the used chemical data.

\bibliographystyle{plain}
\bibliography{CPGS}
\clearpage

\end{document}